\newcommand{\email}[1]{\href{mailto:#1}{\nolinkurl{#1}}}
\newlength{\mySubFigSize}
\definecolor{labelkey}{rgb}{0,0.08,0.45}
\definecolor{refkey}{rgb}{0,0.6,0.0}
\definecolor{Brown}{rgb}{0.45,0.0,0.05}
\definecolor{dgreen}{rgb}{0.00,0.49,0.00}
\definecolor{dblue}{rgb}{0,0.08,0.75}
\renewcommand{\leq}{\ensuremath{\leqslant}}
\renewcommand{\geq}{\ensuremath{\geqslant}}
\newcommand{\Frac}[2]{\displaystyle{\frac{#1}{#2}}} 
\newcommand{\Scal}[2]{\bigg\langle{#1}\;\bigg|\:{#2}\bigg\rangle} 
\newcommand{\scal}[2]{{\langle{{#1}\mid{#2}}\rangle}}
\newcommand{\menge}[2]{\big\{{#1}~\big |~{#2}\big\}}
\newcommand{\Argmin}{\ensuremath{\text{Argmin}}}
\newcommand{\HH}{\ensuremath{{\mathcal H}}}
\newcommand{\Sum}{\ensuremath{\displaystyle\sum}}
\newcommand{\Int}{\ensuremath{\displaystyle\int}}
\newcommand{\emp}{\ensuremath{{\varnothing}}}
\newcommand{\Id}{\ensuremath{\operatorname{Id}}\,}
\newcommand{\RR}{\ensuremath{\mathbb{R}}}
\newcommand{\RP}{\ensuremath{\left[0,+\infty\right[}}
\newcommand{\RPP}{\ensuremath{\left]0,+\infty\right[}}
\newcommand{\conv}{\ensuremath{\text{\rm{conv}\,}}}
\newcommand{\aff}{\ensuremath{\text{\rm{aff}\,}}}
\newcommand{\RX}{\ensuremath{\left]-\infty,+\infty\right]}}
\newcommand{\NN}{\ensuremath{\mathbb N}}
\newcommand{\weakly}{\ensuremath{\:\rightharpoonup\:}}
\newcommand{\exi}{\ensuremath{\exists\,}}
\newcommand{\ran}{\ensuremath{\text{\rm ran}\,}}
\newcommand{\zer}{\ensuremath{\text{\rm zer}\,}}
\newcommand{\pinf}{\ensuremath{{+\infty}}}
\newcommand{\dom}{\ensuremath{\text{\rm dom}\,}}
\newcommand{\prox}{\ensuremath{\text{\rm prox}}}
\newcommand{\Fix}{\ensuremath{\text{\rm Fix}\,}}
\newcommand{\gra}{\ensuremath{\text{\rm gra}\,}}
\newcommand{\zeroun}{\ensuremath{\left]0,1\right[}}   
\newcommand{\rzeroun}{\ensuremath{\left]0,1\right]}}   
\newcommand{\lzeroun}{\ensuremath{\left[0,1\right[}}
\newtheorem{theorem}{Theorem}[section]
\newtheorem{lemma}[theorem]{Lemma}
\newtheorem{corollary}[theorem]{Corollary}
\newtheorem{proposition}[theorem]{Proposition}
\theoremstyle{plain}{\theorembodyfont{\rmfamily}%
\newtheorem{example}[theorem]{Example}}
\theoremstyle{plain}{\theorembodyfont{\rmfamily}%
\newtheorem{remark}[theorem]{Remark}}
\theoremstyle{plain}{\theorembodyfont{\rmfamily}%
\newtheorem{algorithm}[theorem]{Algorithm}}
\theoremstyle{plain}{\theorembodyfont{\rmfamily}%
}
\theoremstyle{plain}{\theorembodyfont{\rmfamily}%
}
\theoremstyle{plain}{\theorembodyfont{\rmfamily}%
\newtheorem{definition}[theorem]{Definition}}
\theoremstyle{plain}{\theorembodyfont{\rmfamily}%
\newtheorem{problem}[theorem]{Problem}}
\numberwithin{equation}{section}
\begin{document}

\title{\sffamily\huge Quasinonexpansive Iterations on the 
Affine Hull of Orbits: From Mann's Mean Value Algorithm to 
Inertial Methods\footnote{Contact 
author: P. L. Combettes, {\ttfamily plc@math.ncsu.edu},
phone:+1 (919) 515 2671.}}

\author{Patrick L. Combettes$^1$ and Lilian E. Glaudin$^2$\\
\small $\!^1$North Carolina State University\\
\small Department of Mathematics\\
\small Raleigh, NC 27695-8205, USA\\
\small \email{plc@math.ncsu.edu}\\
\small\medskip
\small $\!^2$Sorbonne Universit\'es--UPMC Univ. Paris 06\\
\small UMR 7598, Laboratoire Jacques-Louis Lions\\
\small F-75005 Paris, France\\
\small \email{glaudin@ljll.math.upmc.fr}\\
}

\date{
{\it\large Dedicated to the memory of Felipe \'Alvarez 1972-2017}}

\maketitle

\vskip 8mm

\begin{abstract} 
\noindent
Fixed point iterations play a central role in the design
and the analysis of a large number of optimization algorithms.
We study a new iterative scheme in which the update is obtained
by applying a composition of quasinonexpansive operators 
to a point in the affine hull of the orbit generated up to the
current iterate. This investigation unifies several 
algorithmic constructs, including Mann's mean value method, 
inertial methods, and multi-layer memoryless methods. 
It also provides a framework for the development of
new algorithms, such as those we propose for solving monotone 
inclusion and minimization problems.
\end{abstract} 

\bigskip

{\bfseries Keywords.}
Averaged operator,
fixed point iteration, 
forward-backward algorithm,
inertial algorithm,
mean value iterations, 
monotone operator splitting, 
nonsmooth minimization,
Peaceman-Rachford algorithm, 
proximal algorithm.

\newpage

\section{Introduction}

Algorithms arising in various branches of optimization can be
efficiently modeled and analyzed as fixed point iterations in a
real Hilbert space $\HH$; see, e.g.,
\cite{Baus96,Livre1,Borw17,Byrn14,Else01,Opti04,Siop15,%
Davi15,Slav13}. Our paper unifies 
three important algorithmic fixed point frameworks that
coexist in the literature: mean value methods, inertial
methods, and multi-layer memoryless methods. 

Let $T\colon\HH\to\HH$ be
an operator with fixed point set $\Fix T$. In 1953, inspired by 
classical results on the summation of divergent series
\cite{Bore01,Feje03,Toep11}, Mann \cite{Mann53} proposed to extend
the standard successive approximation scheme
\begin{equation}
\label{e:1}
x_0\in\HH\quad\text{and}\quad(\forall n\in\NN)\quad x_{n+1}=Tx_n
\end{equation}
to the mean value algorithm
\begin{equation}
\label{e:2}
x_0\in\HH\quad\text{and}\quad(\forall n\in\NN)\quad
x_{n+1}=T\overline{x}_n,\quad\text{where}\quad
\overline{x}_n\in\conv\big(x_j\big)_{0\leq j\leq n}.
\end{equation}
In other words, the operator 
$T$ is not applied to the most current iterate as in the
memoryless (single step) process \eqref{e:1}, but to a 
point in the convex hull of the orbit
$(x_j)_{0\leq j\leq n}$ generated so far. His motivation was 
that, although the sequence generated by \eqref{e:1} may fail 
to converge to a fixed point of $T$, that generated by
\eqref{e:2} can under suitable conditions. This work was
followed by interesting developments and analyses of such mean
value iterations, e.g., 
\cite{Barr16,Borw91,Brez98,Dots70,Groe72,Kugl03,%
Mann79,Outl69,Rhoa74},
especially in the case when $T$ is nonexpansive (1-Lipschitzian)
or merely quasinonexpansive, that is
(this notion was essentially introduced in \cite{Diaz67})
\begin{equation}
\label{e:qne}
(\forall x\in\HH)(\forall y\in\Fix T)\quad\|Tx-y\|\leq\|x-y\|.
\end{equation}
In \cite{Jmaa02}, the asymptotic behavior of the mean value
process
\begin{equation}
\label{e:3}
x_0\in\HH\;\;\text{and}\;\;(\forall n\in\NN)\quad
x_{n+1}=\overline{x}_n+\lambda_n\big(T_n\overline{x}_n+e_n-
\overline{x}_n\big),
\;\;\text{where}\;\;
\overline{x}_n\in\conv\big(x_j\big)_{0\leq j\leq n},
\end{equation}
was investigated under general conditions on the construction of
the averaging process $(\overline{x}_n)_{n\in\NN}$ and the
assumptions that, for every $n\in\NN$, 
$e_n\in\HH$ models a possible error made in 
the computation of $T_n\overline{x}_n$, 
$\lambda_n\in\left]0,2\right[$, and $T_n\colon\HH\to\HH$ is 
firmly quasinonexpansive, i.e., $2T_n-\Id$ is quasinonexpansive or,
equivalently \cite{Livre1},
\begin{equation}
\label{e:4}
(\forall x\in\HH)(\forall y\in\Fix T_n)\quad
\scal{y-T_nx}{x-T_nx}\leq 0.
\end{equation}

The idea of using the past of the orbit generated by an algorithm
can also be found in the work of Polyak \cite{Poly64,Poly77}, 
who drew inspiration from classical multistep methods in numerical
analysis. His motivation was to improve the speed of
convergence over memoryless methods. 
For instance, the classical gradient method \cite{Poly63} 
for minimizing a smooth convex function $f\colon\HH\to\RR$ 
is an explicit discretization of the continuous-time
process $-\dot{x}(t)=\nabla f(x(t))$. Polyak \cite{Poly64} proposed
to consider instead the process 
$-\ddot{x}(t)-\beta\dot{x}(t)=\nabla f(x(t))$, where
$\beta\in\RPP$, and studied the
algorithm resulting from its explicit discretization.
He observed that, from a mechanical viewpoint, 
the term $\ddot{x}(t)$ can be interpreted as 
an inertial component. More generally, for a proper lower
semicontinuous convex function $f\colon\HH\to\RX$, \'Alvarez 
investigated in \cite{Alva00} an implicit discretization of 
the inertial differential inclusion
$-\ddot{x}(t)-\beta\dot{x}(t)\in\partial f(x(t))$, namely
\begin{equation}
\label{e:5}
(\forall n\in\NN)\quad
x_{n+1}=\prox_{\gamma_n f}\,\overline{x}_n+e_n,
\quad\text{where}\quad
\begin{cases}
\overline{x}_n=(1+\eta_n)x_n-\eta_n x_{n-1}\\
\eta_n\in\lzeroun\\
\gamma_n\in\RPP,
\end{cases}
\end{equation}
and where $\prox_f$ is the proximity operator of $f$
\cite{Livre1,Mor62b}. The inertial proximal point algorithm
\eqref{e:5} has been extended in various directions, e.g.,
\cite{Alva01,Botc15,Cham15}; see also \cite{Atto00} for further
motivation in the context of nonconvex minimization problems.

Working from a different perspective, a structured 
extension of \eqref{e:1} involving the composition of $m$
averaged nonexpansive operators was proposed in \cite{Opti04}. 
This $m$-layer algorithm is governed by the memoryless recursion
\begin{equation}
\label{e:6}
(\forall n\in\NN)\quad
x_{n+1}=x_n+\lambda_n\big(T_{1,n}\cdots T_{m,n}x_n+e_n-x_n\big),
\quad\text{where}\quad\lambda_n\in\rzeroun.
\end{equation}
Recall that a nonexpansive operator $T\colon\HH\to\HH$ is 
averaged with constant 
$\alpha\in\zeroun$ if there exists a nonexpansive operator
$R\colon\HH\to\HH$ such that $T=(1-\alpha)\Id+\alpha R$
\cite{Bail78,Livre1}. The multi-layer iteration process \eqref{e:6}
was shown in \cite{Opti04} to provide a synthetic analysis
of various algorithms, in particular in the area of monotone 
operator splitting methods. It was extended in \cite{Jmaa15} to
an overrelaxed method, i.e., one with parameters
$(\lambda_n)_{n\in\NN}$ possibly larger than $1$.

In the literature, the asymptotic analysis of the above
methods has been carried out independently because of
their apparent lack of common structure. In the present paper, we
exhibit a structure that unifies \eqref{e:1}, \eqref{e:2}, 
\eqref{e:3}, \eqref{e:5}, and \eqref{e:6} in a single algorithm of 
the form 
\begin{multline}
\label{e:8}
x_0\in\HH\quad\text{and}\quad(\forall n\in\NN)\quad
x_{n+1}=\overline{x}_n+\lambda_n\big(T_{1,n}\cdots
T_{m,n}\overline{x}_n+e_n-\overline{x}_n\big),\\
\quad\text{where}\quad
\overline{x}_n\in\aff\big(x_j\big)_{0\leq j\leq n}
\quad\text{and}\quad\lambda_n\in\RPP,
\end{multline}
under the assumption that each operator $T_{i,n}$ is
$\alpha_{i,n}$-averaged quasinonexpansive, i.e., 
\begin{multline}
\label{e:qave}
(\forall x\in\HH)(\forall y\in\Fix T_{i,n})\\
2(1-\alpha_{i,n})\scal{y-T_{i,n}x}{x-T_{i,n}x}
\leq(2\alpha_{i,n}-1)\big(\|x-y\|^2-\|T_{i,n}x-y\|^2\big),
\end{multline}
for some $\alpha_{i,n}\in\rzeroun$, which means that the operator 
$(1-1/\alpha_{i,n})\Id+(1/\alpha_{i,n})T_{i,n}$ 
is quasinonexpansive. In words, at iteration $n$, a point
$\overline{x}_n$ is picked in the affine hull of the orbit
$(x_j)_{0\leq j\leq n}$ generated so far, a composition of
quasinonexpansive operators is applied to it, up to some error
$e_n$, and the update $x_{n+1}$ is obtained via a relaxation with
parameter $\lambda_n$. Note that \eqref{e:8}--\eqref{e:qave} 
not only brings
together mean value iterations, inertial methods, and the
memoryless multi-layer setting of \cite{Opti04,Jmaa15}, but
also provides a flexible framework to design new iterative
methods.

The fixed point problem under consideration will be the
following (note that we allow $1$ as an averaging constant for
added flexibility).

\begin{problem}
\label{prob:1}
\rm
Let $m$ be a strictly positive integer. 
For every $n\in\NN$ and every $i\in\{1,\ldots,m\}$, 
$\alpha_{i,n}\in\rzeroun$ and $T_{i,n}\colon\HH\to\HH$ is 
$\alpha_{i,n}$-averaged nonexpansive if $i<m$, and 
$\alpha_{m,n}$-averaged quasinonexpansive if $i=m$.
In addition, 
\begin{equation}
\label{e:roma2013-07-04y}
S=\bigcap_{n\in\NN}\Fix T_n\neq\emp,\quad\text{where}\quad
(\forall n\in\NN)\quad T_n=T_{1,n}\cdots T_{m,n},
\end{equation}
and one of the following holds:
\begin{enumerate}[label=\rm(\alph*)]
\itemsep0mm 
\item
\label{prob:1a}
For every $n\in\NN$, $T_{m,n}$ is $\alpha_{m,n}$-averaged 
nonexpansive.
\item
\label{prob:1b}
$m>1$ and, for every $n\in\NN$, $\alpha_{m,n}<1$ and 
$\bigcap_{i=1}^m\Fix T_{i,n}\neq\emp$.
\item
\label{prob:1c}
$m=1$.
\end{enumerate}
The problem is to find a point in $S$.
\end{problem}

To solve Problem~\ref{prob:1}, we are going to employ \eqref{e:8},
which we now formulate more formally.

\begin{algorithm}
\label{algo:1}
Consider the setting of Problem~\ref{prob:1}.
For every $n\in\NN$, let $\phi_n$ be an
averaging constant of $T_n$, let
$\lambda_n\in\left]0,1/\phi_n\right]$ 
and, for every $i\in\{1,\ldots,m\}$, let $e_{i,n}\in\HH$. Let 
$(\mu_{n,j})_{n\in\NN, 0\leq j\leq n}$ be a real array which
satisfies the following:
\begin{enumerate}[label=\rm(\alph*)]
\itemsep0mm 
\item
\label{algo:1a}
$\sup_{n\in\NN}\sum_{j=0}^n |\mu_{n,j}|<\pinf$.
\item
\label{algo:1b}
$(\forall n\in\NN)$ $\sum_{j=0}^n\mu_{n,j}=1$.
\item
\label{algo:1c}
$(\forall j\in\NN)$
$\lim\limits_{\substack{n\to\pinf}}\mu_{n,j}=0$.
\item
\label{algo:1e}
There exists a sequence $(\chi_n)_{n\in\NN}$ in $\RPP$ such
that $\inf_{n\in\NN}\chi_n>0$ and every sequence
$(\xi_n)_{n\in\NN}$ in $\RP$ that satisfies
\begin{equation}
\Big(\exi(\varepsilon_n)_{n\in\NN}\in\RP^\NN\Big)\quad
\begin{cases}
\sum_{n\in\NN}\chi_n\varepsilon_n<\pinf\\
(\forall n\in\NN)\quad 
\xi_{n+1}\leq\sum_{j=0}^n\mu_{n,j}\xi_j+\varepsilon_n
\end{cases}
\end{equation} 
converges.
\end{enumerate}
Let $x_0\in\HH$ and set 
\begin{equation}
\label{Kj8Ygf2-21s}
\begin{array}{l}
\text{for}\;n=0,1,\ldots\\
\left\lfloor
\begin{array}{l}
\overline{x}_n=\Sum_{j=0}^n\mu_{n,j}x_j\\
x_{n+1}=\overline{x}_n\!+\!\lambda_n\Big(T_{1,n}\Big(T_{2,n}
\big(\cdots T_{m-1,n}(T_{m,n}\overline{x}_n\!+\!e_{m,n})
\!+\!e_{m-1,n}\cdots\big)\!+\!e_{2,n}\Big)
\!+\!e_{1,n}\!-\!\overline{x}_n\Big).
\end{array}
\right.\\
\end{array}
\end{equation}
\end{algorithm}

\begin{remark}
\label{r:t_n}
Here are some comments about the parameters appearing in
Problem~\ref{prob:1} and Algorithm~\ref{algo:1}.
\begin{enumerate}
\item
\label{r:t_nii}
The composite operator $T_n$ of \eqref{e:roma2013-07-04y} is 
averaged quasinonexpansive with constant 
\begin{equation}
\label{e:phi_n}
\phi_n=
\begin{cases}
\Bigg(1+\Bigg(\Sum_{i=1}^m
\dfrac{\alpha_{i,n}}{1-\alpha_{i,n}}\Bigg)^{-1}\Bigg)^{-1},
&\text{if}\;\:\displaystyle{\max_{1\leq i\leq m}}\alpha_{i,n}<1;\\
1,&\text{otherwise.}
\end{cases}
\end{equation}
The proof is given in \cite[Proposition~2.5]{Jmaa15} for case
\ref{prob:1a} of Problem~\ref{prob:1}. It easily extends to
case \ref{prob:1b}, while case \ref{prob:1c} is trivial.
\item
\label{r:t_ni}
Examples of arrays $(\mu_{n,j})_{n\in\NN, 0\leq j\leq n}$ that
satisfy conditions \ref{algo:1a}--\ref{algo:1e} in
Algorithm~\ref{algo:1} are provided in \cite[Section~2]{Jmaa02}
in the case of mean value iterations, i.e., 
$\inf_{n\in\NN}\min_{0\leq j\leq n}\mu_{n,j}\geq 0$, with
$\chi_n\equiv 1$. 
An important instance with negative
coefficients will be presented in Example~\ref{ex:main}. 
\item
The term $e_{i,n}$ in \eqref{Kj8Ygf2-21s} models a possible
numerical error in the implementation of the operator $T_{i,n}$.
\end{enumerate}
\end{remark}

The material is organized as follows. In Section~\ref{sec:2} we
provide preliminary results. The main results on the convergence of
the orbits of Algorithm~\ref{algo:1} are presented in
Section~\ref{sec:3}. Section~\ref{sec:4} is dedicated to new
algorithms for fixed point computation,
monotone operator splitting, and nonsmooth minimization based on
the proposed framework. 

\noindent
{\bfseries Notation.}
$\HH$ is a real Hilbert space with scalar
product $\scal{\cdot}{\cdot}$ and associated norm $\|\cdot\|$. We
denote by $\Id$ the identity operator on $\HH$; $\weakly$ and
$\to$ denote, respectively, weak and strong convergence in $\HH$.
The positive and negative parts of $\xi\in\RR$ are respectively
$\xi^+=\max\{0,\xi\}$ and $\xi^-=-\min\{0,\xi\}$. Finally,
$\delta_{n,j}$ is the Kronecker delta: it takes on the value $1$
if $n=j$, and $0$ otherwise.

\section{Preliminary results}
\label{sec:2}

In this section we establish some technical facts that will be
used subsequently. We start with a Gr\"onwall-type result.
\begin{lemma}
\label{l:gr}
Let $(\theta_n)_{n\in\NN}$ and $(\varepsilon_n)_{n\in\NN}$
be sequences in $\RP$, and let $(\nu_n)_{n\in\NN}$ be a
sequence in $\RR$ such that
$(\forall n\in\NN)$ $\theta_{n+1}\leq (1+\nu_n)
\theta_n+\varepsilon_n$.
Then
\begin{equation}
\label{c:gronwall}
(\forall n\in\NN)\quad\theta_{n+1} 
\leq\theta_0\exp\Bigg(\Sum_{k=0}^{n}\nu_k\Bigg)+
\sum_{j=0}^{n-1}\varepsilon_j
\exp\Bigg(\Sum_{k=j+1}^{n}\nu_k\Bigg)+\varepsilon_n.
\end{equation}
\end{lemma}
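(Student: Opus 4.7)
The plan is to prove \eqref{c:gronwall} by induction on $n$, the only analytic ingredient being the elementary inequality $1+\nu\leq e^{\nu}$, valid for every $\nu\in\RR$. Because $\theta_n\geq 0$ and $e^{\nu_n}\geq 0$, this inequality can be used at each step to replace the multiplicative factor $1+\nu_n$ appearing in the hypothesis by $\exp(\nu_n)$ without reversing any inequality; this is really the only place where the nonnegativity assumption on $(\theta_n)_{n\in\NN}$ is invoked.

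For the base case $n=0$, the recursion yields $\theta_1\leq(1+\nu_0)\theta_0+\varepsilon_0\leq\theta_0\exp(\nu_0)+\varepsilon_0$, which matches \eqref{c:gronwall} at $n=0$ because the sum $\sum_{j=0}^{-1}$ is empty by convention.

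For the inductive step, assuming the bound holds at index $n$, I would start from
\begin{equation*}
\theta_{n+2}\leq(1+\nu_{n+1})\theta_{n+1}+\varepsilon_{n+1}\leq e^{\nu_{n+1}}\theta_{n+1}+\varepsilon_{n+1},
\end{equation*}
then substitute the induction hypothesis for $\theta_{n+1}$ and distribute $e^{\nu_{n+1}}$ across the three resulting terms. Using the identity $e^{\nu_{n+1}}\exp(\sum_{k=a}^{n}\nu_k)=\exp(\sum_{k=a}^{n+1}\nu_k)$ adjusts every exponent in one stroke. The isolated term $\varepsilon_n$ from the hypothesis picks up a factor $e^{\nu_{n+1}}=\exp(\sum_{k=n+1}^{n+1}\nu_k)$ and merges with the shifted sum so that its range extends to $j=n$, while $\varepsilon_{n+1}$ becomes the new isolated tail. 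This produces exactly the right-hand side of \eqref{c:gronwall} at index $n+1$.

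I do not foresee any genuine obstacle: no sign restriction on $\nu_n$ is needed because $1+\nu\leq e^{\nu}$ is unconditional, and the nonnegativity of $\theta_n$ and $\varepsilon_n$ together with the positivity of the exponentials keeps every inequality pointing in the right direction. The entire argument is thus a careful bookkeeping around the exponential telescoping.
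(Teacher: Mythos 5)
Your proof is correct and follows essentially the same route as the paper: both arguments rest on the unconditional inequality $1+\nu\leq e^{\nu}$ together with the nonnegativity of $(\theta_n)_{n\in\NN}$, the only cosmetic difference being that you package the telescoping as a formal forward induction while the paper unrolls the recursion backward in a single chain of inequalities.
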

\begin{proof}
We have $(\forall n\in\NN)$ $1+\nu_n\leq\exp(\nu_n)$. Therefore
$\theta_1\leq\theta_0\exp(\nu_0)+\varepsilon_0$ and
\begin{align}
(\forall n\in\NN\smallsetminus\{0\})\quad\theta_{n+1} 
&\leq\theta_{n}\exp(\nu_{n})+\varepsilon_{n}\nonumber\\
&\leq\theta_{n-1}\exp(\nu_{n})
\exp(\nu_{n-1})+
\varepsilon_{n-1}\exp(\nu_{n})+\varepsilon_{n}
\nonumber\\
&\leq\theta_0\prod_{k=0}^{n}\exp(\nu_{k})+
\Sum_{j=0}^{n-1}\varepsilon_j\prod_{k=j+1}^{n}\exp
(\nu_k)+\varepsilon_n\nonumber\\
&=\theta_0\exp\Bigg(\Sum_{k=0}^{n}\nu_k\Bigg)
+\sum_{j=0}^{n-1}\varepsilon_j
\exp\Bigg(\Sum_{k=j+1}^{n}\nu_k\Bigg)+\varepsilon_n,
\end{align}
as claimed.
\end{proof}

\begin{lemma}{\rm\cite[Theorem~43.5]{Knop54}}
\label{l:lem2}
Let $(\xi_n)_{n\in\NN}$ be a sequence in $\RR$, let $\xi\in\RR$,
suppose that $(\mu_{n,j})_{n\in\NN, 0\leq j\leq n}$ is a real
array that satisfies conditions \ref{algo:1a}--\ref{algo:1c} in
Algorithm~\ref{algo:1}. Then $\xi_n\to\xi$ $\Rightarrow$ 
$\sum_{j=0}^n\mu_{n,j}\xi_j\to\xi$.
\end{lemma}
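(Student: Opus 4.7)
The statement is the classical Silverman--Toeplitz summability theorem, so my plan is to reduce to the case where the target is $0$ and then split the weighted sum into a head and a tail. Since condition \ref{algo:1b} gives $\sum_{j=0}^n\mu_{n,j}=1$, I can write
\begin{equation}
\Sum_{j=0}^n\mu_{n,j}\xi_j-\xi=\Sum_{j=0}^n\mu_{n,j}(\xi_j-\xi),
\end{equation}
so the task is to show that $\eta_n=\xi_n-\xi\to 0$ implies $\sum_{j=0}^n\mu_{n,j}\eta_j\to 0$.

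The key parameter will be the bound $M=\sup_{n\in\NN}\sum_{j=0}^n|\mu_{n,j}|$, which is finite by \ref{algo:1a}. Given $\varepsilon>0$, the plan is: first use $\eta_n\to 0$ to pick $N\in\NN$ such that $|\eta_j|\leq\varepsilon/(2M)$ for every $j\geq N$; this will control the tail uniformly in $n$ via
\begin{equation}
\Sum_{j=N}^n|\mu_{n,j}||\eta_j|\leq\frac{\varepsilon}{2M}\Sum_{j=N}^n|\mu_{n,j}|\leq\frac{\varepsilon}{2}.
\end{equation}
Then, because $N$ is fixed and there are only finitely many ``head'' indices $j\in\{0,\dots,N-1\}$, condition \ref{algo:1c} lets me find $n_0\geq N$ such that $|\mu_{n,j}|\leq\varepsilon/(2N(|\eta_j|+1))$ for every $n\geq n_0$ and every $j<N$, making the head contribution at most $\varepsilon/2$ as well.

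Combining the two estimates yields $|\sum_{j=0}^n\mu_{n,j}\eta_j|\leq\varepsilon$ for $n\geq n_0$, which is the desired conclusion. No real obstacle is expected: the only delicate point is keeping the head/tail decomposition uniform in $n$, but this is precisely the purpose of the three hypotheses --- \ref{algo:1a} bounds the tail mass, \ref{algo:1b} reduces the problem to null sequences, and \ref{algo:1c} kills each fixed column. Since the paper is merely quoting \cite[Theorem~43.5]{Knop54}, I would present the proof in this compact three-line form rather than reproduce Knopp's argument in full.
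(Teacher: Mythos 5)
Your proof is correct: it is the standard head/tail argument for the sufficiency part of the Silverman--Toeplitz theorem, with condition \ref{algo:1a} controlling the tail mass, \ref{algo:1b} reducing to a null sequence, and \ref{algo:1c} killing each fixed column, and all the uniformity issues are handled properly. The paper offers no proof of its own (it simply cites \cite[Theorem~43.5]{Knop54}), and your argument is precisely the one behind that cited result, so there is nothing further to compare.
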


\begin{lemma}
\label{l:1}
Let $(\beta_n)_{n\in\NN}$, $(\gamma_n)_{n\in\NN}$, 
$(\delta_n)_{n\in\NN}$, $(\eta_n)_{n\in\NN}$, and 
$(\lambda_n)_{n\in\NN}$ be sequences in $\left[0,\pinf\right[$, 
let $(\phi_n)_{n\in\NN}$ be a sequence in $\left]0,1\right]$, 
let $(\vartheta,\sigma)\in\RPP^2$, and let $\eta\in\zeroun$.
Set $\beta_{-1}=\beta_0$ and  
\begin{equation}
(\forall n\in\NN)\quad\omega_n=\dfrac{1}{\phi_n}-\lambda_n,
\end{equation}
and suppose that the following hold:
\begin{enumerate}[label=\rm(\alph*)]
\item 
\label{h:l1c}
$(\forall n\in\NN)\quad\eta_{n}\leq\eta_{n+1}\leq\eta$.
\item 
\label{h:l1b}
$(\forall n\in\NN)\quad\gamma_{n}\leq 
\eta(1+\eta)+\eta\vartheta\omega_n$.
\item 
\label{h:l1a}
$(\forall n\in\NN)\quad  
\dfrac{\eta^2(1+\eta)+\eta\sigma}{\vartheta}
<\dfrac{1}{\phi_n}-\eta^2\omega_{n+1}$. 
\item 
\label{h:l1d}
$(\forall n\in\NN)\quad 0<\lambda_n\leq
\dfrac{\vartheta/\phi_n-
\eta\big(\eta(1+\eta)+\eta\vartheta\omega_{n+1}+\sigma\big)}
{\vartheta\big(1+\eta(1+\eta)+\eta\vartheta\omega_{n+1}
+\sigma\big)} $.
\item 
\label{h:l1e}
$(\forall n\in\NN)\quad 
\beta_{n+1}-\beta_n-\eta_n(\beta_n-\beta_{n-1})\leq 
\dfrac{(1/\phi_n-\lambda_n)\big(\eta_n/(\eta_n+\vartheta\lambda_n)
-1\big)}{\lambda_n}\delta_{n+1}+\gamma_n\delta_{n}$.
\end{enumerate}
Then $\sum_{n\in\NN}\delta_n<\pinf$.
\end{lemma}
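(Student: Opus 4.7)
The plan is to convert (e) into a telescopic estimate that forces $\sum_n \delta_n$ to be finite. Setting $\psi_n = \beta_n - \beta_{n-1}$ (so $\psi_0 = 0$) and introducing the nonnegative coefficient
\[
c_n = \frac{\vartheta\omega_n}{\eta_n + \vartheta\lambda_n},
\]
a short computation identifies the coefficient of $\delta_{n+1}$ in (e) as $-c_n$, so that (e) reads
\[
\psi_{n+1} \le \eta_n\psi_n + \gamma_n\delta_n - c_n\delta_{n+1}.
\]
The strategy is to sum this from $0$ to $N$ and exploit a shift cancellation between $\gamma_n\delta_n$ and $c_{n-1}\delta_n$.

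The cornerstone is the algebraic bound $\gamma_{n+1} + \sigma \le c_n$ for every $n\in\NN$. With $B_n = \eta(1+\eta) + \eta\vartheta\omega_{n+1} + \sigma$, condition (c) guarantees $\vartheta/\phi_n - \eta B_n > 0$, so the numerator in the bound of (d) is positive; clearing denominators in (d) is then equivalent to $B_n(\vartheta\lambda_n + \eta) \le \vartheta\omega_n$. Combined with the monotonicity $\eta_n \le \eta$ from (a), this gives $c_n \ge \vartheta\omega_n/(\vartheta\lambda_n + \eta) \ge B_n$, and using (b) (which controls $\gamma_{n+1}$ by $B_n - \sigma$) yields the desired $\gamma_{n+1} + \sigma \le B_n \le c_n$.

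Summing the inertial inequality from $n = 0$ to $N$ then telescopes the left-hand side, $\sum_{n=0}^N \psi_{n+1} = \beta_{N+1} - \beta_0$, while Abel summation combined with the monotonicity of $(\eta_n)$ and $\beta_n \ge 0$ produces $\sum_{n=0}^N \eta_n\psi_n \le \eta_N\beta_N \le \eta\beta_N$. On the right, the reindexing $\sum_{n=0}^N c_n\delta_{n+1} = c_N\delta_{N+1} + \sum_{n=1}^N c_{n-1}\delta_n$ together with the key estimate yields $\sum_{n=0}^N (\gamma_n\delta_n - c_n\delta_{n+1}) \le \gamma_0\delta_0 - \sigma\sum_{n=1}^N \delta_n - c_N\delta_{N+1}$. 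Rearranging,
\[
\beta_{N+1} + \sigma\sum_{n=1}^N \delta_n + c_N\delta_{N+1} \le \beta_0 + \gamma_0\delta_0 + \eta\beta_N.
\]

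Finally, dropping the positive terms on the left gives the elementary recursion $\beta_{N+1} \le \eta\beta_N + (\beta_0 + \gamma_0\delta_0)$ with $\eta < 1$, so $M := \sup_{n\in\NN}\beta_n < \pinf$. Inserting this uniform bound back produces $\sigma\sum_{n=1}^N \delta_n \le \beta_0 + \gamma_0\delta_0 + \eta M$ for every $N$, whence $\sum_{n\in\NN}\delta_n < \pinf$. The main obstacle is the algebraic step $\gamma_{n+1} + \sigma \le c_n$: conditions (b), (c), and (d) are finely calibrated and must be combined in the right order to isolate the margin $\sigma$. Once this margin is in hand, everything else reduces to a routine Abel summation and a Gr\"onwall-style boundedness argument.
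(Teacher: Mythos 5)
Your proof is correct and follows essentially the same route as the paper: the decisive step in both is the algebraic inequality $\gamma_{n+1}+\sigma\leq c_n$ extracted from (b)--(d) (your $c_n=\vartheta\omega_n/(\eta_n+\vartheta\lambda_n)$ is exactly the negative of the coefficient of $\delta_{n+1}$ that the paper manipulates), followed by the same Gr\"onwall-type bound $\beta_{N+1}\leq\eta\beta_N+\mathrm{const}$. The only cosmetic difference is that the paper packages the telescoping into a decreasing Lyapunov sequence $\kappa_n=\beta_n-\eta_n\beta_{n-1}+\gamma_n\delta_n$, whereas you unroll it by explicit Abel summation.
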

\begin{proof}
We use arguments similar to those used in \cite{Alva01,Botc15}.
It follows from \ref{h:l1a} that 
$(\forall n\in\NN)$ $0<\vartheta/\phi_n-
\eta^2\omega_{n+1}\vartheta-\eta^2(1+\eta)-\eta\sigma$. This shows
that $(\lambda_n)_{n\in\NN}$ is well defined. Now set 
$(\forall n\in\NN)$ $\rho_n=1/(\eta_n+\vartheta\lambda_n)$ 
and $\kappa_n=\beta_n-\eta_n\beta_{n-1}+\gamma_n\delta_n$. 
We derive from \ref{h:l1c} and \ref{h:l1e} that 
\begin{align}
\label{e:l24a}
(\forall n\in\NN)\quad\kappa_{n+1}-\kappa_n 
&\leq\beta_{n+1}-\eta_n\beta_n-\beta_n+\eta_n\beta_{n-1} 
+\gamma_{n+1}\delta_{n+1}-\gamma_n\delta_n\nonumber\\
&\leq\bigg(\dfrac{(1/\phi_n-\lambda_n)(\eta_n\rho_n-1)}{\lambda_n}
+\gamma_{n+1}\bigg)\delta_{n+1}.
\end{align}
On the other hand, $(\forall n\in\NN)$
$\vartheta(1+(\eta(1+\eta)+\eta\vartheta\omega_{n+1}+\sigma))>0$.
Consequently, \ref{h:l1d} can be written as 
\begin{equation}
\label{e:l24abis}
(\forall n\in\NN)\quad\vartheta\lambda_n+
\vartheta\lambda_n\big(\eta(1+\eta)+\eta\vartheta\omega_{n+1}
+\sigma\big)\leq\dfrac{\vartheta}{\phi_n}-
\eta\big(\eta(1+\eta)+\eta\vartheta\omega_{n+1}+\sigma\big).
\end{equation}
Using \ref{h:l1c} and \ref{h:l1b}, and then  
\eqref{e:l24abis}, we get
\begin{equation}
\label{e:l24b}
(\forall n\in\NN)\quad
(\eta_n+\vartheta\lambda_n)(\gamma_{n+1}+\sigma)+\vartheta\lambda_n 
\leq(\eta+\vartheta\lambda_n)\big(\eta(1+\eta)
+\eta\vartheta\omega_{n+1}+\sigma\big)+
\vartheta\lambda_n\leq\dfrac{\vartheta}{\phi_n}.
\end{equation}
However, 
\begin{align}
(\forall n\in\NN)\quad&
(\eta_n+\vartheta\lambda_n)(\gamma_{n+1}+\sigma)
+{\vartheta\lambda_n}\leq\dfrac{\vartheta}{\phi_n}\nonumber\\
&\qquad\Leftrightarrow
(\eta_{n}+\vartheta\lambda_n)(\gamma_{n+1}+\sigma)-
(1/\phi_n-\lambda_n)
\vartheta\leq 0\nonumber\\
&\qquad\Leftrightarrow
(1/\phi_n-\lambda_n)
\bigg(\dfrac{-\vartheta}{\eta_n+\vartheta\lambda_n}\bigg)
\leq -(\gamma_{n+1}+\sigma)\nonumber\\
&\qquad\Leftrightarrow 
\dfrac{(1/\phi_n-\lambda_n)(\eta_n\rho_n-1)}{\lambda_n}
+\gamma_{n+1}\leq-\sigma.
\label{e:l24c}
\end{align}
It therefore follows from \eqref{e:l24a} and \eqref{e:l24b} that
\begin{equation}
\label{e:l24d}
(\forall n\in\NN)\quad\kappa_{n+1}-\kappa_n\leq-\sigma\delta_{n+1}.
\end{equation}
Thus, $(\kappa_n)_{n\in\NN}$ is decreasing and 
\begin{equation}
\label{e:l24e}
(\forall n\in\NN)\quad\beta_n-\eta\beta_{n-1}
=\kappa_n-\gamma_n\delta_n
\leq\kappa_n\leq\kappa_0,
\end{equation}
from which we infer that
$(\forall n\in\NN)$
$\beta_n\leq\kappa_0+\eta\beta_{n-1}$. In turn,
\begin{equation}
\label{e:l24f}
(\forall n\in\NN\smallsetminus\{0\})\quad\beta_n\leq
\eta^n\beta_0+\kappa_0\Sum_{j=0}^{n-1}\eta^j 
\leq\eta^n\beta_0+\dfrac{\kappa_0}{1-\eta}.
\end{equation}
Altogether, we derive from \eqref{e:l24d}, 
\eqref{e:l24e}, and \eqref{e:l24f} that
\begin{equation}
(\forall n\in\NN)\quad\sigma
\Sum_{j=0}^n\delta_{j+1}\leq\kappa_0-\kappa_{n+1} 
\leq\kappa_0+\eta\beta_n\leq\dfrac{\kappa_0}{1-\eta} 
+\eta^{n+1}\beta_0.
\end{equation}
Hence, 
$\sum_{j\geq 1}\delta_j\leq{\kappa_0}/((1-\eta)\sigma)<\pinf$,
and the proof is complete.
\end{proof}

\begin{lemma}
\label{l:chi}
Let $(\eta_n)_{n\in\NN}$ be a sequence in $\lzeroun$. For every
$n\in\NN$, set
\begin{equation}
\label{24g9h8Hkj2-09C}
(\forall k\in\NN)\quad
\zeta_{k,n}=
\begin{cases}
0,&\text{if}\;\:k\leq n;\\
\Sum_{j=n+1}^{k}(\eta_j-1),&\text{if}\;\:k>n,
\end{cases}
\end{equation}
and $\chi_n=\sum_{k\geq n}\exp(\zeta_{k,n})$.
Then the following hold:
\begin{enumerate}
\item 
\label{l:chi_i}
Let $\tau\in\left[2,\pinf\right[$ and suppose that 
$(\forall n\in\NN)$
$\eta_{n+1}=n/(n+1+\tau)$. 
Then $(\forall n\in\NN)$ $\chi_n\leq (n+7)/2$.
\item 
\label{l:chi_ii}
Suppose that $(\exi\eta\in\lzeroun)(\forall n\in\NN)$ 
$\eta_n\leq\eta$.
Then $(\forall n\in\NN)\;\chi_n\leq e/(1-\eta)$.
\end{enumerate}
\end{lemma}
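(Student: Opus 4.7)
Both parts hinge on controlling the telescoping exponents $\zeta_{k,n}=\sum_{j=n+1}^{k}(\eta_j-1)$, which are non-positive because $\eta_j<1$. The strategy in each case is: (1) obtain a quantitative lower bound on $-\zeta_{k,n}$ depending on $k-n$; (2) exponentiate to get a summable envelope for $\exp(\zeta_{k,n})$; (3) sum this envelope using a geometric-series or integral comparison; and (4) a small elementary inequality to match the stated constant.

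\textbf{Part \ref{l:chi_ii}.} Since $\eta_j\leq\eta<1$, for every $k>n$ we have
\begin{equation*}
\zeta_{k,n}\leq -(k-n)(1-\eta),\qquad\text{hence}\qquad
\chi_n\leq\sum_{m\geq 0}e^{-m(1-\eta)}=\frac{1}{1-e^{-(1-\eta)}}.
\end{equation*}
To replace this by $e/(1-\eta)$, I would invoke the elementary inequality $e^x\geq 1+x$ (valid for all $x\in\RR$), which rearranges to $1-e^{-x}\geq xe^{-x}$ for $x\geq 0$ and gives
\begin{equation*}
\frac{1}{1-e^{-(1-\eta)}}\leq\frac{e^{1-\eta}}{1-\eta}\leq\frac{e}{1-\eta}.
\end{equation*}

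\textbf{Part \ref{l:chi_i}.} With $\eta_{j}=(j-1)/(j+\tau)$ for $j\geq 1$, one computes $\eta_j-1=-(\tau+1)/(j+\tau)$, so
\begin{equation*}
-\zeta_{k,n}=(\tau+1)\sum_{j=n+1}^{k}\frac{1}{j+\tau}\geq(\tau+1)\int_{n+1}^{k+1}\frac{dx}{x+\tau}=(\tau+1)\ln\frac{k+1+\tau}{n+1+\tau}
\end{equation*}
by the standard monotone comparison of a decreasing sum with its integral. Exponentiating yields, for $k>n$,
\begin{equation*}
\exp(\zeta_{k,n})\leq\bigg(\frac{n+1+\tau}{k+1+\tau}\bigg)^{\!\tau+1}.
\end{equation*}
Summing over $k\geq n$ (the term $k=n$ contributes $1$) and bounding the tail once more by an integral,
\begin{equation*}
\chi_n\leq 1+(n+1+\tau)^{\tau+1}\int_{0}^{\pinf}\frac{dx}{(n+1+\tau+x)^{\tau+1}}
=1+\frac{n+1+\tau}{\tau}=2+\frac{n+1}{\tau}.
\end{equation*}
Since $\tau\geq 2$, this is at most $2+(n+1)/2=(n+5)/2\leq(n+7)/2$, as claimed.

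\textbf{Main obstacle.} Neither step is deep; the only delicate point is getting the exponent $\tau+1$ (not $\tau$) correct in Part \ref{l:chi_i}, which is what makes the tail sum $\sum_{k>n}(k+1+\tau)^{-(\tau+1)}$ converge with a clean closed-form integral bound and, for $\tau\geq 2$, leaves enough slack to absorb the ``$+1$'' from the $k=n$ term into a linear-in-$n$ majorant. A secondary subtlety in Part \ref{l:chi_ii} is choosing the inequality $e^x\geq 1+x$ rather than a looser Taylor estimate, since that is exactly what converts $1/(1-e^{-(1-\eta)})$ into the advertised $e/(1-\eta)$ without an extra factor.
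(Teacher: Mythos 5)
Your proof is correct and follows essentially the same route as the paper's: lower-bound $-\zeta_{k,n}$ linearly (part (ii)) or logarithmically via an integral comparison (part (i)), exponentiate, and sum the resulting envelope by a second integral or geometric-series comparison. The only cosmetic differences are that in (i) you keep the exponent $\tau+1$ throughout and invoke $\tau\geq 2$ only at the end (the paper instead replaces $(1+\tau)/(j+\tau)$ by $3/(j+2)$ at the outset and arrives at the bound $(n+3)^3/(2(n+2)^2)\leq(n+7)/2$, whereas your route gives the slightly sharper $(n+5)/2$), and in (ii) you sum the geometric series exactly and then apply $e^x\geq 1+x$, while the paper bounds the same sum directly by $\int_{n-1}^{\pinf}\exp\big((\eta-1)(\xi-n)\big)\,d\xi=e^{1-\eta}/(1-\eta)$.
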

\begin{proof}
\ref{l:chi_i}:
We have
$(\forall n\in\NN)(\forall k\in\{n+1,n+2,\ldots\})$
$\zeta_{k,n}=-(1+\tau)\sum_{j=n+1}^{k}1/(j+\tau)\leq
-3\sum_{j=n+1}^{k}1/(j+2)$.
Since $\xi\mapsto 1/(\xi+2)$ is decreasing on
$\left[1,\pinf\right[$, it follows that
\begin{equation}
\label{e:tfc1}
(\forall n\in\NN)(\forall k\in\{n+1,n+2,\ldots\})\quad\zeta_{k,n}
\leq-3\Int_{n+1}^{k+1}\dfrac{d\xi}{\xi+2} 
=\ln\dfrac{(n+3)^3}{(k+3)^3}.
\end{equation}
Furthermore, since $\xi\mapsto 1/(\xi+3)^3$ is decreasing on
$\left]-1,\pinf\right[$, \eqref{24g9h8Hkj2-09C} yields
\begin{equation}
\label{e:tfc2}
(\forall n\in\NN)\quad\chi_n 
\leq\Sum_{k\geq n}\dfrac{(n+3)^3}{(k+3)^3}
\leq (n+3)^3\Int_{n-1}^{\pinf}\dfrac{d\xi}{(\xi+3)^3}
=\dfrac{(n+3)^3}{2(n+2)^2}\leq\dfrac{n+7}{2}.
\end{equation}  

\ref{l:chi_ii}: Note that
\begin{equation}
(\forall n\in\NN)(\forall k\in\{n+1,n+2,\ldots\})\quad\zeta_{k,n}
=\Sum_{j=n+1}^{k}(\eta_j-1)\leq\Sum_{j=n+1}^k(\eta-1)=(\eta-1)(k-n).
\end{equation}
Since $\xi\mapsto\exp((\eta-1)\xi)$ is decreasing on
$\left]-1,\pinf\right[$, it follows that
\begin{equation}
(\forall n\in\NN)\quad\chi_n
\leq\Sum_{k\geq n}\exp\big((\eta-1)(k-n)\big)
\leq\Int_{n-1}^\pinf\exp\big((\eta-1)(\xi-n)\big)d\xi
=\dfrac{\exp(1-\eta)}{1-\eta},
\end{equation}
which proves the assertion. 
\end{proof}

The next example provides an instance of an array
$(\mu_{n,j})_{n\in\NN, 0\leq j\leq n}$ satisfying the conditions of
Algorithm~\ref{algo:1} with negative entries. This example
will be central to the study of the convergence of some inertial
methods.

\begin{example}
\label{ex:main}
Let $(\mu_{n,j})_{n\in\NN, 0\leq j\leq n}$ be a real array 
such that $\mu_{0,0}=1$ and 
\begin{equation}
\label{24g9h8Hkj2-02}
(\forall n\in\NN)\quad 1\leq\mu_{n,n}< 2
\quad\text{and}\quad(\forall j\in\{0,\ldots,n\})\quad
\mu_{n,j}=
\begin{cases}
1-\mu_{n,n},&\text{if}\;\: j=n-1;\\
0,&\text{if}\;\: j<n-1.
\end{cases}
\end{equation}
For every $n\in\NN$, set
\begin{equation}
\label{24g9h8Hkj2-09B}
(\forall k\in\NN)\quad
\zeta_{k,n}=
\begin{cases}
0,&\text{if}\;\: k\leq n;\\
\Sum_{j=n+1}^{k}(\mu_{j,j}-2),&\text{if}\;\:k>n,
\end{cases}
\end{equation}
and suppose that $\chi_n=\sum_{k\geq n}\exp(\zeta_{k,n})<\pinf$.
Then $(\mu_{n,j})_{n\in\NN, 0\leq j\leq n}$ satisfies conditions
\ref{algo:1a}--\ref{algo:1e} in Algorithm~\ref{algo:1}.
\end{example}
\begin{proof}
\ref{algo:1a}: $(\forall n\in\NN)$ $\sum_{j=0}^n |\mu_{n,j}|=
\mu_{n,n}+|1-\mu_{n,n}|\leq 3$.

\ref{algo:1b}: $(\forall n\in\NN)$ 
$\sum_{j=0}^n\mu_{n,j}=(1-\mu_{n,n})+\mu_{n,n}=1$.

\ref{algo:1c}: Let $j\in\NN$. Then $(\forall n\in\NN)$ 
$n>j+1\Rightarrow\mu_{n,j}=0$. Hence, 
$\lim\limits_{\substack{n\to\pinf}}\mu_{n,j}=0$.

\ref{algo:1e}: We have $(\forall n\in\NN)$ 
$\chi_n=\sum_{k\geq n}\exp(\zeta_{k,n})\geq\exp(\zeta_{n,n})=1$.
Now suppose that $(\xi_n)_{n\in\NN}$ is a sequence in $\RP$ such 
that there exists a sequence $(\varepsilon_n)_{n\in\NN}$ in $\RP$ 
that satisfies
\begin{equation}
\label{24g9h8Hkj2-01}
\Sum_{n\in\NN}\chi_n\varepsilon_n<\pinf\quad\text{and}\quad
(\forall n\in\NN)\quad
\xi_{n+1}\leq\Sum_{j=0}^n\mu_{n,j}\xi_j+\varepsilon_n.
\end{equation}
Set $\theta_0=0$ and $(\forall n\in\NN)$
$\theta_{n+1}=[\xi_{n+1}-\xi_{n}]^+$ and $\nu_n=\mu_{n,n}-2$. It 
results from \eqref{24g9h8Hkj2-01}, \eqref{24g9h8Hkj2-02}, and the 
inequalities $\xi_1-\xi_0\leq(\mu_{0,0}-1)\xi_0+\varepsilon_0$ and
\begin{align}
(\forall n\in\NN\smallsetminus\{0\})\quad\xi_{n+1}-\xi_n
&\leq (\mu_{n,n}-1)\xi_n+(1-\mu_{n,n})\xi_{n-1}
+\varepsilon_n\nonumber\\
&=(\mu_{n,n}-1)(\xi_n-\xi_{n-1})+\varepsilon_n,
\end{align}
that $(\forall n\in\NN)$
$\theta_{n+1}\leq (\mu_{n,n}-1)\theta_n+\varepsilon_n
=(1+\nu_{n})\theta_n+\varepsilon_n$.
Consequently, we derive from Lemma~\ref{l:gr} and 
\eqref{24g9h8Hkj2-09B} that $(\forall n\in\NN)$
$\theta_{n+1}\leq\sum_{k=0}^{n}\varepsilon_k\exp(\zeta_{n,k})$.
Using {\rm\cite[Theorem~141]{Knop54}}, this yields
\begin{equation}
\label{e:yqfKi}
\Sum_{n\in\NN}\theta_{n+1}\leq\Sum_{n\in\NN}
\Sum_{k=0}^{n}\varepsilon_k\exp(\zeta_{n,k}) 
=\Sum_{k\in\NN}\varepsilon_k\Sum_{n\geq k}\exp(\zeta_{n,k}) 
=\Sum_{k\in\NN}\varepsilon_k\chi_k.
\end{equation}
Now set $(\forall n\in\NN)$ $\omega_n=\xi_n-\sum_{k=0}^n\theta_k$. 
Since $\sum_{k\in\NN}\chi_k\varepsilon_k<\pinf$, we infer from 
\eqref{e:yqfKi} that $\sum_{n\in\NN}\theta_n<\pinf$. 
Thus, since $\inf_{n\in\NN}\xi_n\geq 0$, 
$(\omega_n)_{n\in\NN}$ is bounded below and 
\begin{equation}
(\forall n\in\NN)\quad
\omega_{n+1}=\xi_{n+1}-\theta_{n+1}-\sum_{k=0}^n\theta_k\leq
\xi_{n+1}-\xi_{n+1}+\xi_n-\sum_{k=0}^n\theta_k=\omega_n.
\end{equation}
Altogether, $(\omega_n)_{n\in\NN}$ converges, and so does
therefore $(\xi_n)_{n\in\NN}$.
\end{proof}

\section{Asymptotic behavior of Algorithm~\ref{algo:1}}
\label{sec:3}

The main result of the paper is the following theorem, which
analyzes the asymptotic behavior of Algorithm~\ref{algo:1}.

\begin{theorem}
\label{t:1}
Consider the setting of Algorithm~\ref{algo:1}.
For every $n\in\NN$, define
\begin{equation}
\label{e:roma2013-07-04o}
\vartheta_n=\lambda_n\sum_{i=1}^m\|e_{i,n}\|\quad\text{and}\quad
(\forall i\in\{1,\ldots,m\})\quad T_{i+,n}=
\begin{cases}
T_{i+1,n}\cdots T_{m,n},&\text{if}\;\;i\neq m;\\
\Id,&\text{if}\;\;i=m,
\end{cases}
\end{equation}
and set
\begin{equation} 
\label{Kj8Ygf2-21t}
\nu_n\colon S\to\RP\colon x\mapsto\vartheta_n
\big(2\|\overline{x}_n-x\|+\vartheta_n\big).
\end{equation}
Then the following hold:
\begin{enumerate}
\item
\label{t:1-i}
Let $n\in\NN$ and $x\in S$. Then
$\|x_{n+1}-x\|\leq
\sum_{j=0}^n|\mu_{n,j}|\,\|x_j-x\|+\vartheta_n$.
\item
\label{t:1-ii}
Let $n\in\NN$ and $x\in S$. Then
\begin{align*}
\|x_{n+1}-x\|^2
&\leq\sum_{j=0}^n\mu_{n,j}\|x_j-x\|^2
-\dfrac{1}{2}
\sum_{j=0}^n\Sum_{k=0}^n\mu_{n,j}\mu_{n,k}\|x_j-x_k\|^2\\
&\quad\;-\lambda_n(1/\phi_n-\lambda_n)
\|T_n\overline{x}_n-\overline{x}_n\|^2+\nu_n(x).
\end{align*}
\item
\label{t:1-iii}
Let $n\in\NN$ and $x\in S$. Then
\begin{align*}
\hskip-4mm \|x_{n+1}-x\|^2
&\leq\Sum_{j=0}^n\mu_{n,j}\|x_j-x\|^2-\dfrac{1}{2}
\Sum_{j=0}^n\Sum_{k=0}^n\mu_{n,j}\mu_{n,k}\|x_j-x_k\|^2\\
&\quad\;+\lambda_n(\lambda_n-1)
\|T_n\overline{x}_n-\overline{x}_n\|^2\\
&\quad\;-\lambda_n\underset{1\leq i\leq m}{\text{\rm max}}
\bigg(\dfrac{1-\alpha_{i,n}}{\alpha_{i,n}}
\left\|(\Id-T_{i,n})T_{i+,n}\overline{x}_n-
(\Id-T_{i,n})T_{i+,n}x\right\|^2\bigg)+\nu_n(x).
\end{align*}
\end{enumerate}
Now assume that, in addition, 
\begin{equation}
\label{e:1a}
\Sum_{n\in\NN}\chi_n\sum_{j=0}^n\sum_{k=0}^n
[\mu_{n,j}\mu_{n,k}]^-\|x_j-x_k\|^2<\pinf
\quad\text{and}\quad
(\forall x\in S)\quad\Sum_{n\in\NN}\chi_n\nu_n(x)<\pinf.
\end{equation}
Then the following hold:
\begin{enumerate}
\setcounter{enumi}{3}
\itemsep0mm 
\item
\label{t:1i}
Let $x\in S$. Then $(\|x_n-x\|)_{n\in\NN}$ converges.
\item
\label{t:1iva}
$\lambda_n(1/\phi_n-\lambda_n)
\|T_n\overline{x}_n-\overline{x}_n\|^2\to 0$.
\item
\label{t:1ivb}
$\sum_{j=0}^n\sum_{k=0}^n[\mu_{n,j}\mu_{n,k}]^+\|x_j-x_k\|^2\to 0$.
\item
\label{t:1ii}
Suppose that 
\begin{equation}
\label{24g9h8Hkj1-23}
(\exi\varepsilon\in\zeroun)(\forall n\in\NN)\quad 
\lambda_n\leq(1-\varepsilon)/\phi_n.
\end{equation}
Then $x_{n+1}-\overline{x}_n\to 0$. In addition, if 
every weak sequential cluster point of 
$(\overline{x}_n)_{n\in\NN}$ is in $S$, 
then there exists $x\in S$ such that $x_n\weakly x$.
\item
\label{t:1iii}
Suppose that $(\overline{x}_n)_{n\in\NN}$ has a strong cluster 
point $x$ in $S$ and that \eqref{24g9h8Hkj1-23} holds. Then 
$x_n\to x$.
\item
\label{t:1ivc}
Let $x\in S$ and suppose that 
$(\exi\varepsilon\in\zeroun)(\forall n\in\NN)$ 
$\lambda_n\leq\varepsilon+(1-\varepsilon)/\phi_n$. Then
\[
\lambda_n\underset{1\leq i\leq m}{\text{\rm max}}
\dfrac{1-\alpha_{i,n}}{\alpha_{i,n}}
\left\|(\Id-T_{i,n})T_{i+,n}\overline{x}_n-(\Id-T_{i,n})T_{i+,n}x
\right\|^2\to 0.
\]
\end{enumerate}
\end{theorem}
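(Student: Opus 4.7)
The plan is to derive the three one-step estimates (i)--(iii) by a direct expansion of the update formula, and then read the asymptotic items (iv)--(ix) off these estimates via condition~\ref{algo:1e} of Algorithm~\ref{algo:1} and Lemma~\ref{l:lem2}. Write the recursion compactly as $x_{n+1}=\overline{x}_n+\lambda_n(T_n\overline{x}_n-\overline{x}_n)+\lambda_n\widetilde{e}_n$, where $\widetilde{e}_n$ is the total error propagated through the composition. Since $T_{1,n},\ldots,T_{m-1,n}$ are nonexpansive (being averaged nonexpansive), a straightforward induction yields $\|\widetilde{e}_n\|\leq\sum_{i=1}^m\|e_{i,n}\|$, so $\lambda_n\|\widetilde{e}_n\|\leq\vartheta_n$. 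By Remark~\ref{r:t_n}\ref{r:t_nii}, $T_n$ is $\phi_n$-averaged quasinonexpansive; set $R_n=(1-1/\phi_n)\Id+(1/\phi_n)T_n$, which is quasinonexpansive with $\Fix R_n=\Fix T_n\supseteq S$. The update then reads $x_{n+1}=(1-\lambda_n\phi_n)\overline{x}_n+\lambda_n\phi_n R_n\overline{x}_n+\lambda_n\widetilde{e}_n$, and since $\lambda_n\phi_n\in\left]0,1\right]$, item (i) follows from the triangle inequality combined with $\|R_n\overline{x}_n-x\|\leq\|\overline{x}_n-x\|$; item (ii) is obtained by applying the convex combination identity to the first two terms (yielding the coefficient $\lambda_n\phi_n(1-\lambda_n\phi_n)\|R_n\overline{x}_n-\overline{x}_n\|^2=\lambda_n(1/\phi_n-\lambda_n)\|T_n\overline{x}_n-\overline{x}_n\|^2$), expanding $\|\overline{x}_n-x\|^2$ via the Jensen-type identity $\|\sum_j\mu_{n,j}a_j\|^2=\sum_j\mu_{n,j}\|a_j\|^2-\tfrac12\sum_{j,k}\mu_{n,j}\mu_{n,k}\|a_j-a_k\|^2$ (valid because $\sum_j\mu_{n,j}=1$), and absorbing the error contribution into $\nu_n(x)$.

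For (iii), I would replace the use of the quasinonexpansiveness of the composite operator by a layer-by-layer argument. In every sub-case of Problem~\ref{prob:1} one has $x\in\bigcap_{i=1}^m\Fix T_{i,n}$ (trivially in case~\ref{prob:1c}, by hypothesis in case~\ref{prob:1b}, and via the classical identity $\Fix(T_{1,n}\cdots T_{m,n})=\bigcap_{i=1}^m\Fix T_{i,n}$ for compositions of averaged nonexpansive operators in case~\ref{prob:1a}). Applying the defining inequality of an $\alpha_{i,n}$-averaged (quasi)nonexpansive operator to each pair $(T_{i+,n}\overline{x}_n,T_{i+,n}x)$ and telescoping from $i=m$ down to $i=1$ gives
\[
\|T_n\overline{x}_n-x\|^2\leq\|\overline{x}_n-x\|^2-\Sum_{i=1}^m\dfrac{1-\alpha_{i,n}}{\alpha_{i,n}}\|(\Id-T_{i,n})T_{i+,n}\overline{x}_n-(\Id-T_{i,n})T_{i+,n}x\|^2,
\]
which I then bound below by replacing the sum with its maximum term. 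The missing $\lambda_n(\lambda_n-1)\|T_n\overline{x}_n-\overline{x}_n\|^2$ coefficient appears by expanding $\|x_{n+1}-x\|^2$ through the algebraic identity $\|a+\lambda b\|^2=(1-\lambda)\|a\|^2+\lambda\|a+b\|^2+\lambda(\lambda-1)\|b\|^2$ with $a=\overline{x}_n-x$ and $b=T_n\overline{x}_n-\overline{x}_n$; combining with the Jensen identity and $\nu_n(x)$ yields (iii).

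The asymptotic items then follow almost mechanically. Setting $\xi_n=\|x_n-x\|^2$ and $\varepsilon_n=\tfrac12\sum_{j,k}[\mu_{n,j}\mu_{n,k}]^-\|x_j-x_k\|^2+\nu_n(x)$, assumption~\eqref{e:1a} gives $\sum_n\chi_n\varepsilon_n<\pinf$, and (ii) reduces to $\xi_{n+1}\leq\sum_j\mu_{n,j}\xi_j+\varepsilon_n$; condition~\ref{algo:1e} thus delivers (iv). Since Lemma~\ref{l:lem2} ensures that $\sum_j\mu_{n,j}\xi_j$ shares the limit of $(\xi_n)_{n\in\NN}$, reading (ii) again forces both nonnegative residuals $\lambda_n(1/\phi_n-\lambda_n)\|T_n\overline{x}_n-\overline{x}_n\|^2$ and $\tfrac12\sum_{j,k}[\mu_{n,j}\mu_{n,k}]^+\|x_j-x_k\|^2$ to tend to zero, establishing (v) and (vi). Under~\eqref{24g9h8Hkj1-23} the ratio $\lambda_n/(1/\phi_n-\lambda_n)$ is bounded by $(1-\varepsilon)/\varepsilon$, so (v) upgrades to $\lambda_n^2\|T_n\overline{x}_n-\overline{x}_n\|^2\to 0$ and, together with $\vartheta_n\to 0$, gives $x_{n+1}-\overline{x}_n\to 0$; items (vii) and (viii) are then standard weak and strong Opial-type consequences of (iv) combined with the observation that, since $x_{n+1}-\overline{x}_n\to 0$, the weak (resp.\ strong) sequential cluster points of $(x_n)_{n\in\NN}$ and $(\overline{x}_n)_{n\in\NN}$ coincide. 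For (ix), the bound $\lambda_n\leq\varepsilon+(1-\varepsilon)/\phi_n$ implies $(\lambda_n(\lambda_n-1))^+\leq\big((1-\varepsilon)/\varepsilon\big)\lambda_n(1/\phi_n-\lambda_n)$, so the adverse term in (iii) is dominated by a sequence already known to vanish and the same squeezing argument applies.

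The principal difficulty is the bookkeeping in (iii): tracking the errors propagated through an $m$-fold composition, telescoping the individual averaged inequalities in the correct order, and verifying the common fixed point property in each of the three sub-cases of Problem~\ref{prob:1}. A secondary subtlety is the asymmetry introduced by the relaxation range $\lambda_n\in\left]0,1/\phi_n\right]$, which permits overrelaxations $\lambda_n\geq 1$; handling (vii) and (ix) requires the careful domination of $\lambda_n^2$ or $(\lambda_n(\lambda_n-1))^+$ by the already controlled quantity $\lambda_n(1/\phi_n-\lambda_n)$ arising from (v).
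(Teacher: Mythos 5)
Your proposal follows essentially the same route as the paper for every item: the same error aggregation through the layers, the same reparametrization $R_n=(1-1/\phi_n)\Id+(1/\phi_n)T_n$ with relaxation $\lambda_n\phi_n\in\rzeroun$, the same affine Jensen identity for $\|\overline{x}_n-x\|^2$, the same choice of $\xi_n$ and $\varepsilon_n$ to invoke condition \ref{algo:1e}, the same Toeplitz argument (Lemma~\ref{l:lem2}) for items \ref{t:1iva}--\ref{t:1ivb}, and the same dominations $\lambda_n/(1/\phi_n-\lambda_n)\leq(1-\varepsilon)/\varepsilon$ and $\lambda_n-1\leq(1/\varepsilon-1)(1/\phi_n-\lambda_n)$ for items \ref{t:1ii} and \ref{t:1ivc}. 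All of that is correct.

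There is, however, one genuine flaw in your justification of item \ref{t:1-iii}. You assert that in every sub-case of Problem~\ref{prob:1} one has $x\in\bigcap_{i=1}^m\Fix T_{i,n}$, and in case \ref{prob:1a} you derive this from the identity $\Fix(T_{1,n}\cdots T_{m,n})=\bigcap_{i=1}^m\Fix T_{i,n}$. That identity (\cite[Proposition~4.49(i)]{Livre1}) is valid only under the hypothesis $\bigcap_{i=1}^m\Fix T_{i,n}\neq\emp$, which is assumed in case \ref{prob:1b} but \emph{not} in case \ref{prob:1a}; for instance, if $T_{1,n}$ and $T_{2,n}$ are the projectors onto two disjoint parallel lines, the composition has plenty of fixed points while the intersection of the individual fixed point sets is empty. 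So the claim is false as stated. The step it is meant to support nevertheless survives, for a different reason: in cases \ref{prob:1a} and \ref{prob:1c} the innermost operator $T_{m,n}$ is averaged \emph{nonexpansive}, so the two-point descent inequality $\|T_{m,n}u-T_{m,n}v\|^2\leq\|u-v\|^2-\frac{1-\alpha_{m,n}}{\alpha_{m,n}}\|(\Id-T_{m,n})u-(\Id-T_{m,n})v\|^2$ holds for \emph{all} pairs $(u,v)$, with no fixed-point requirement; the telescoped chain then closes because $T_{1,n}\cdots T_{m,n}x=x$ follows from $x\in S$ alone. The fixed-point property $x\in\Fix T_{m,n}$ is genuinely needed only in case \ref{prob:1b}, where $T_{m,n}$ is merely averaged quasinonexpansive, and there it does follow from the assumed nonempty intersection via the cited identity. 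This is exactly the case distinction the paper makes around \eqref{Kj8Ygf2-28a}--\eqref{Kj8Ygf2-28c}; you should restructure your argument for \ref{t:1-iii} accordingly rather than claiming a common fixed point in all three cases.
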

\begin{proof}
Let $n\in\NN$ and set
\begin{equation} 
\label{Kj8Ygf2-21r}
e_n=T_{1,n}\Big(T_{2,n}\big(\cdots 
T_{m-1,n}(T_{m,n}\overline{x}_n+e_{m,n})
+e_{m-1,n}\cdots\big)+e_{2,n}\Big)+e_{1,n}-T_n\overline{x}_n.
\end{equation}
If $m>1$, using the nonexpansiveness of the operators 
$(T_{i,n})_{1\leq i\leq m-1}$, we obtain
\begin{align}
\label{e:qf8}
\|e_n\|
&\leq\|e_{1,n}\|+\bigg\|T_{1,n}\bigg(T_{2,n}\big(\cdots
T_{m-1,n}(T_{m,n}\overline{x}_n
+e_{m,n})+e_{m-1,n}\cdots\big)+e_{2,n}\bigg)-T_{1,n}\cdots 
T_{m,n}\overline{x}_n\bigg\|\nonumber\\[2mm]
&\leq\|e_{1,n}\|+\nonumber\\[2mm]
&\quad\;\bigg\|T_{2,n}\bigg(T_{3,n}\big(\cdots T_{m-1,n}
(T_{m,n}\overline{x}_n+e_{m,n})+e_{m-1,n}\cdots\big)+e_{3,n}\bigg)
+e_{2,n}-T_{2,n}\cdots T_{m,n}\overline{x}_n\bigg\|\nonumber\\[2mm]
&\leq\|e_{1,n}\|+\|e_{2,n}\|+\nonumber\\[2mm]
&\quad\;\bigg\|T_{3,n}\bigg(T_{4,n}\big(\cdots T_{m-1,n}
(T_{m,n}\overline{x}_n+e_{m,n})+e_{m-1,n}\cdots\big)+e_{4,n}\bigg)
+e_{3,n}-T_{3,n}\cdots T_{m,n}\overline{x}_n\bigg\|\nonumber\\
&\;\;\vdots\nonumber\\
&\leq\sum_{i=1}^m\|e_{i,n}\|.
\end{align}
Thus, we infer from \eqref{e:roma2013-07-04o} that
\begin{equation}
\label{24g9h8Hkj2-12}
\lambda_n\|e_n\|\leq\vartheta_n.
\end{equation}
On the other hand, we derive from \eqref{Kj8Ygf2-21s} and 
\eqref{Kj8Ygf2-21r} that
\begin{equation}
\label{e:zn}
x_{n+1}=\overline{x}_n+\lambda_n
\big(T_n\overline{x}_n+e_n-\overline{x}_n\big).
\end{equation}
Now set
\begin{equation}
\label{e:roma2013-07-04u}
R_n=\dfrac{1}{\phi_n}T_n-\dfrac{1-\phi_n}{\phi_n}\Id
\quad\text{and}\quad\eta_n=\lambda_n\phi_n.
\end{equation}
Then $\eta_n\in\rzeroun$, $\Fix R_n=\Fix T_n$, and $R_n$ is 
quasinonexpansive since $T_n$ is averaged quasinonexpansive with
constant $\phi_n$ by Remark~\ref{r:t_n}\ref{r:t_nii}.
Furthermore, \eqref{e:zn} can be written as
\begin{equation}
\label{e:roma2013-07-04a}
x_{n+1}=\overline{x}_n+
\eta_n\big(R_n\overline{x}_n-\overline{x}_n\big)+\lambda_ne_n.
\end{equation}
Next, we define
\begin{equation}
\label{Kj8Ygf2-27x}
z_n=\overline{x}_n+\lambda_n(T_n\overline{x}_n-\overline{x}_n)
=\overline{x}_n+\eta_n(R_n\overline{x}_n-\overline{x}_n). 
\end{equation}
Let $x\in S$. Since $x\in\Fix R_n$ and $R_n$ is
quasinonexpansive, we have
\begin{align}
\label{e:roma2013-07-04t}
\|z_n-x\|
&=\|(1-\eta_n)(\overline{x}_n-x)+
\eta_n(R_n\overline{x}_n-x)\|\nonumber\\
&\leq(1-\eta_n)\|\overline{x}_n-x\|+\eta_n\|R_n\overline{x}_n-x\|
\nonumber\\
&\leq\|\overline{x}_n-x\|.
\end{align}
Hence, \eqref{e:roma2013-07-04a} and \eqref{24g9h8Hkj2-12} yield
\begin{equation}
\label{e:roma2013-07-04c}
\|x_{n+1}-x\|\leq\|z_n-x\|+\lambda_n\|e_n\|
\leq\|z_n-x\|+\vartheta_n.
\end{equation}
In turn, it follows from \eqref{e:roma2013-07-04t} and
\eqref{Kj8Ygf2-21t} that
\begin{equation}
\label{Kj8Ygf2-28e}
\|x_{n+1}-x\|^2
\leq\|z_n-x\|^2+2\vartheta_n\|z_n-x\|+\vartheta_n^2
\leq\|z_n-x\|^2+\nu_n(x).
\end{equation}
In addition, \cite[Lemma~2.14(ii)]{Livre1} yields
\begin{equation}
\label{Kj8Ygf2-20o}
\|\overline{x}_n-x\|^2=\bigg\|\sum_{j=0}^n\mu_{n,j}(x_j-x)\bigg\|^2
=\sum_{j=0}^n\mu_{n,j}\|x_j-x\|^2-\dfrac{1}{2}\sum_{j=0}^n
\sum_{k=0}^n\mu_{n,j}\mu_{n,k}\|x_j-x_k\|^2.
\end{equation}

\ref{t:1-i}: By \eqref{e:roma2013-07-04c}
and \eqref{e:roma2013-07-04t}, 
\begin{equation}
(\forall n\in\NN)(\forall x\in S)\quad 
\|x_{n+1}-x\|\leq\|\overline{x}_n-x\|+\vartheta_n
\leq\sum_{j=0}^n|\mu_{n,j}|\,\|x_j-x\|+\vartheta_n.
\end{equation}

\ref{t:1-ii}: 
Let $n\in\NN$ and $x\in S$. Since
\begin{align}
\label{Kj8Ygf2-27b}
\|z_n-x\|^2
&=\|(1-\eta_n)(\overline{x}_n-x)+
\eta_n(R_n\overline{x}_n-x)\|^2\nonumber\\
&=(1-\eta_n)\|\overline{x}_n-x\|^2+
\eta_n\|R_n\overline{x}_n-x\|^2-\eta_n(1-\eta_n)
\|R_n\overline{x}_n-\overline{x}_n\|^2\nonumber\\
&\leq\|\overline{x}_n-x\|^2-\eta_n(1-\eta_n)
\|R_n\overline{x}_n-\overline{x}_n\|^2,
\end{align}
we deduce from \eqref{Kj8Ygf2-28e} and \eqref{e:roma2013-07-04u} 
that
\begin{align}
\|x_{n+1}-x\|^2
&\leq\|z_n-x\|^2+\nu_n(x)\nonumber\\
&\leq\|\overline{x}_n-x\|^2-\eta_n(1-\eta_n)
\|R_n\overline{x}_n-\overline{x}_n\|^2+\nu_n(x)\nonumber\\
&=\|\overline{x}_n-x\|^2-
\lambda_n(1/\phi_n-\lambda_n)
\|T_n\overline{x}_n-\overline{x}_n\|^2+\nu_n(x).
\end{align}
In view of \eqref{Kj8Ygf2-20o}, 
we obtain the announced inequality.

\ref{t:1-iii}: 
Let $n\in\NN$ and $x\in S$.
We derive from \cite[Proposition~4.35]{Livre1} that
\begin{align}
\label{e:roma2013-07-05a}
(\forall i\in\{1,\ldots,m-1\})(\forall (u,v)\in\HH^2)\nonumber\\
\|T_{i,n}u-T_{i,n}v\|^2
&\leq\|u-v\|^2-\dfrac{1-\alpha_{i,n}}{\alpha_{i,n}}
\|(\Id-T_{i,n})u-(\Id-T_{i,n})v\|^2.
\end{align}
If $m>1$, using this inequality successively for $i=1,\ldots,m-1$
leads to 
\begin{align}
\label{e:roma2013-07-04q}
\|T_n\overline{x}_n-x\|^2
&=\left\|T_{1,n}\cdots T_{m,n}\overline{x}_n-T_{1,n}\cdots 
T_{m,n}x\right\|^2\nonumber\\
&\leq\|T_{m,n}\overline{x}_n-T_{m,n}x\|^2-\sum_{i=1}^{m-1}
\dfrac{1-\alpha_{i,n}}{\alpha_{i,n}}
\left\|(\Id-T_{i,n})T_{i+,n}\overline{x}_n-
(\Id-T_{i,n})T_{i+,n}x\right\|^2\nonumber\\
&\leq\|T_{m,n}\overline{x}_n-T_{m,n}x\|^2-\max_{1\leq i\leq m-1}
\dfrac{1-\alpha_{i,n}}{\alpha_{i,n}}
\left\|(\Id-T_{i,n})T_{i+,n}\overline{x}_n-
(\Id-T_{i,n})T_{i+,n}x\right\|^2.
\end{align}
Note that, in cases \ref{prob:1a} and \ref{prob:1c} of 
Problem~\ref{prob:1},
\begin{equation}
\label{Kj8Ygf2-28a}
\|T_{m,n}\overline{x}_n-T_{m,n}x\|^2\leq
\|\overline{x}_n-x\|^2-\dfrac{1-\alpha_{m,n}}{\alpha_{m,n}}
\|(\Id-T_{m,n})\overline{x}_n-(\Id-T_{m,n})x\|^2.
\end{equation}
This inequality remains valid in case \ref{prob:1b}
of Problem~\ref{prob:1} since \cite[Proposition~4.49(i)]{Livre1} 
implies that
\begin{equation}
\label{Kj8Ygf2-28c}
\Fix(T_{1,n}\cdots T_{m,n})=\bigcap_{i=1}^m\Fix T_{i,n}
\end{equation}
and, therefore, that $x\in\Fix T_{m,n}$. 
Altogether, we deduce from \eqref{e:roma2013-07-04q} and 
\eqref{Kj8Ygf2-28a} that
\begin{equation}
\label{Kj8Ygf2-28b}
\|T_n\overline{x}_n-x\|^2\leq
\|\overline{x}_n-x\|^2-\max_{1\leq i\leq m}
\dfrac{1-\alpha_{i,n}}{\alpha_{i,n}}
\left\|(\Id-T_{i,n})T_{i+,n}\overline{x}_n-
(\Id-T_{i,n})T_{i+,n}x\right\|^2.
\end{equation}
Hence, it follows from \eqref{Kj8Ygf2-27x} that
\begin{align}
\label{Kj8Ygf2-28d}
\|z_n-x\|^2
&=\|(1-\lambda_n)(\overline{x}_n-x)+
\lambda_n(T_n\overline{x}_n-x)\|^2\nonumber\\
&=(1-\lambda_n)\|\overline{x}_n-x\|^2+
\lambda_n\|T_n\overline{x}_n-x\|^2+\lambda_n(\lambda_n-1)
\|T_n\overline{x}_n-\overline{x}_n\|^2\nonumber\\
&\leq\|\overline{x}_n-x\|^2-\lambda_n\max_{1\leq i\leq m}
\dfrac{1-\alpha_{i,n}}{\alpha_{i,n}}
\left\|(\Id-T_{i,n})T_{i+,n}\overline{x}_n-
(\Id-T_{i,n})T_{i+,n}x\right\|^2\nonumber\\
&\quad\;+\lambda_n(\lambda_n-1)
\|T_n\overline{x}_n-\overline{x}_n\|^2.
\end{align}
In view of \eqref{Kj8Ygf2-28e} and \eqref{Kj8Ygf2-20o}, the 
inequality is established. 

\ref{t:1i}: Let $x\in S$ and set
\begin{equation}
(\forall n\in\NN)\quad
\begin{cases}
\xi_n=\|x_n-x\|^2\\
\varepsilon_n=\nu_n(x)+\dfrac{1}{2}\Sum_{j=0}^n
\Sum_{k=0}^n[\mu_{n,j}\mu_{n,k}]^-\|x_j-x_k\|^2.
\end{cases}
\end{equation}
Since $\inf_{n\in\NN}\lambda_n(1/\phi_n-\lambda_n)\geq 0$, 
\eqref{e:1a} and \ref{t:1-ii} imply that
\begin{equation}
\label{e:2-17-01-03a}
\sum_{n\in\NN}\chi_n\varepsilon_n<\pinf\quad\text{and}\quad
(\forall n\in\NN)\quad 
\xi_{n+1}\leq\sum_{j=0}^n\mu_{n,j}\xi_j+\varepsilon_n.
\end{equation} 
In turn, it follows from \eqref{e:1a} and condition 
\ref{algo:1e} in Algorithm~\ref{algo:1} that
$(\|x_n-x\|)_{n\in\NN}$ converges.

\ref{t:1iva}--\ref{t:1ivb}: 
Let $x\in S$. Then it follows from \ref{t:1i} that
$\rho=\lim_{n\to\pinf}\|x_n-x\|$ is well defined. Hence, 
Lemma~\ref{l:lem2} implies that
$\sum_{j=0}^n\mu_{n,j}\|x_j-x\|^2\to\rho^2$ and therefore that
\begin{equation}
\label{24g9h8Hkj1-08b}
\sum_{j=0}^n\mu_{n,j}\|x_j-x\|^2-\|x_{n+1}-x\|^2\to 0.
\end{equation}
Since $\inf_{n\in\NN}\chi_n>0$, \eqref{e:1a} yields 
\begin{equation}
\label{24g9h8Hkj1-14a}
\nu_n(x)\to 0\quad\text{and}\quad
\Sum_{j=0}^n\Sum_{k=0}^n[\mu_{n,j}\mu_{n,k}]^-\|x_j-x_k\|^2\to 0.
\end{equation}
It follows from \ref{t:1-ii}, \eqref{24g9h8Hkj1-08b}, and
\eqref{24g9h8Hkj1-14a} that 
\begin{align}
\label{24g9h8Hkj1-08c1}
0&\leq
\lambda_n(1/\phi_n-\lambda_n)\|T_n\overline{x}_n-\overline{x}_n\|^2
+\dfrac{1}{2}\Sum_{j=0}^n\Sum_{k=0}^n[\mu_{n,j}\mu_{n,k}]^+
\|x_j-x_k\|^2\nonumber\\
&\leq\Sum_{j=0}^n\mu_{n,j}\|x_j-x\|^2-\|x_{n+1}-x\|^2+\dfrac{1}{2}
\Sum_{j=0}^n\Sum_{k=0}^n[\mu_{n,j}\mu_{n,k}]^-\|x_j-x_k\|^2
+\nu_n(x)\nonumber\\
&\to 0,
\end{align}
which gives the desired conclusions.

\ref{t:1ii}: 
Set $\zeta=1/\varepsilon-1$.
We deduce from \eqref{e:1a} and \eqref{Kj8Ygf2-21t} 
that $\sum_{n\in\NN}\vartheta_n^2<\pinf$. Hence, it follows 
from \eqref{e:zn}, \eqref{24g9h8Hkj2-12}, \eqref{24g9h8Hkj1-23}, 
and \ref{t:1iva} that
\begin{align}
\label{24g9h8Hkj1-24a}
\|x_{n+1}-\overline{x}_n\|^2
&\leq 2\Big(\lambda_n^2
\big\|T_n\overline{x}_n-\overline{x}_n\big\|^2
+\lambda_n^2\|e_n\|^2\Big)\nonumber\\
&\leq 2\bigg(\dfrac{\lambda_n}{1/\phi_n-\lambda_n}
\lambda_n(1/\phi_n-\lambda_n)
\big\|T_n\overline{x}_n-\overline{x}_n\big\|^2
+\vartheta_n^2\bigg)\nonumber\\
&\leq2\Big(\zeta\lambda_n(1/\phi_n-\lambda_n)
\big\|T_n\overline{x}_n-\overline{x}_n\big\|^2
+\vartheta_n^2\Big)\nonumber\\
&\to 0.
\end{align}
Therefore $x_{n+1}-\overline{x}_n\weakly 0$ and hence the weak 
sequential cluster points of $(x_n)_{n\in\NN}$ lie in $S$. In
view of \ref{t:1i} and \cite[Lemma~2.47]{Livre1}, the claim 
is proved.

\ref{t:1iii}: Since $x_{n+1}-\overline{x}_n\to 0$ by 
\eqref{24g9h8Hkj1-24a}, $(x_n)_{n\in\NN}$ 
has a strong cluster point $x\in S$. In view of \ref{t:1i}, 
$x_n\to x$.

\ref{t:1ivc}: Set $\zeta=1/\varepsilon-1$. Then, for every 
$n\in\NN$, $\lambda_n\leq 1/(1+\zeta)+\zeta/(\phi_n(1+\zeta))$ and 
therefore $(1+\zeta)\lambda_n-1\leq\zeta/\phi_n$, i.e., 
$\lambda_n-1\leq\zeta(1/\phi_n-\lambda_n)$.
We therefore derive from \ref{t:1-iii}, 
\eqref{24g9h8Hkj1-08b}, \eqref{e:1a}, \ref{t:1iva},
and \eqref{24g9h8Hkj1-14a} that 
\begin{align}
\label{24g9h8Hkj1-08c2}
0&\leq
\lambda_n\underset{1\leq i\leq m}{\text{\rm max}}
\dfrac{1-\alpha_{i,n}}{\alpha_{i,n}}
\left\|(\Id-T_{i,n})T_{i+,n}\overline{x}_n-(\Id-T_{i,n})
T_{i+,n}x\right\|^2\nonumber\\
&\leq\Sum_{j=0}^n\mu_{n,j}\|x_j-x\|^2-\|x_{n+1}-x\|^2+\dfrac{1}{2}
\Sum_{j=0}^n\Sum_{k=0}^n[\mu_{n,j}\mu_{n,k}]^-\|x_j-x_k\|^2
\nonumber\\
&\quad\;+\lambda_n(\lambda_n-1)
\|T_n\overline{x}_n-\overline{x}_n\|^2+\nu_n(x)\nonumber\\
&\leq\Sum_{j=0}^n\mu_{n,j}\|x_j-x\|^2-\|x_{n+1}-x\|^2+\dfrac{1}{2}
\Sum_{j=0}^n\Sum_{k=0}^n[\mu_{n,j}\mu_{n,k}]^-\|x_j-x_k\|^2
\nonumber\\
&\quad\;+\zeta\lambda_n(1/\phi_n-\lambda_n)
\|T_n\overline{x}_n-\overline{x}_n\|^2+\nu_n(x)\nonumber\\
&\to 0,
\end{align}
which shows the assertion.
\end{proof}

Next, we present two corollaries that are instrumental in the
analysis of two important special cases of our framework: mean 
value and inertial multi-layer algorithms. 

\begin{corollary}
\label{c:1}
Consider the setting of Algorithm~\ref{algo:1} and define
$(\vartheta_n)_{n\in\NN}$ as in \eqref{e:roma2013-07-04o}. 
Assume that
\begin{equation}
\label{e:1f}
\inf_{n\in\NN}\min_{0\leq j\leq n}\mu_{n,j}\geq 0\quad
\text{and}\quad
\Sum_{n\in\NN}\chi_n\vartheta_n<\pinf.
\end{equation}
Then the following hold:
\begin{enumerate}
\item
\label{c:1i}
$\sum_{j=0}^n\sum_{k=0}^n\mu_{n,j}\mu_{n,k}\|x_j-x_k\|^2\to 0$.
\item
\label{c:1ii}
Let $x\in S$ and suppose that 
$(\exi\varepsilon\in\zeroun)(\forall n\in\NN)$ 
$\lambda_n\leq\varepsilon+(1-\varepsilon)/\phi_n$. Then
\[
\lambda_n\underset{1\leq i\leq m}{\text{\rm max}}
\dfrac{1-\alpha_{i,n}}{\alpha_{i,n}}
\left\|(\Id-T_{i,n})T_{i+,n}\overline{x}_n-(\Id-T_{i,n})T_{i+,n}x
\right\|^2\to 0.
\]
\item
\label{c:1iii}
Suppose that every weak sequential cluster point of 
$(\overline{x}_n)_{n\in\NN}$ is in $S$ and that 
$(\exi\varepsilon\in\zeroun)(\forall n\in\NN)$
$\lambda_n\leq(1-\varepsilon)/\phi_n$. Then 
$x_{n+1}-\overline{x}_n\to 0$ and there exists $x\in S$ 
such that $x_n\weakly x$.
\item
\label{c:1iii'}
Suppose that every weak sequential 
cluster point of $(\overline{x}_n)_{n\in\NN}$ is in $S$, 
that $\sup_{n\in\NN}\phi_n<1$, and that
$(\exi\varepsilon\in\zeroun)(\forall n\in\NN)$ 
$\lambda_n\leq\varepsilon+(1-\varepsilon)/\phi_n$. 
Then $x_{n+1}-\overline{x}_n\to 0$ and there exists $x\in S$ 
such that $x_n\weakly x$.
\item
\label{c:1iv}
Suppose that every weak sequential cluster point of 
$(\overline{x}_n)_{n\in\NN}$ is in $S$ and that 
$\inf_{n\in\NN}\mu_{n,n}>0$. Then 
$x_{n}-\overline{x}_n\to 0$ and there exists $x\in S$ 
such that $x_n\weakly x$.
\end{enumerate}
\end{corollary}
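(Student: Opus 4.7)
The plan is to reduce everything to Theorem~\ref{t:1} by first verifying its hypothesis \eqref{e:1a}, and then to invoke the appropriate conclusion for each part. Since $\mu_{n,j}\ge 0$ by \eqref{e:1f}, every $[\mu_{n,j}\mu_{n,k}]^-$ vanishes and the first half of \eqref{e:1a} is trivial. For the second half, Theorem~\ref{t:1}\ref{t:1-i} with $|\mu_{n,j}|=\mu_{n,j}$ and $\sum_j\mu_{n,j}=1$ yields $\max_{k\le n+1}\|x_k-x\|\le\max_{k\le n}\|x_k-x\|+\vartheta_n$; since condition \ref{algo:1e} gives $\inf_n\chi_n>0$, assumption \eqref{e:1f} forces $\sum_n\vartheta_n<\pinf$, so $(x_n)$ and $(\overline{x}_n)$ are bounded. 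Writing $M=\sup_n\|\overline{x}_n-x\|<\pinf$, definition \eqref{Kj8Ygf2-21t} then gives $\nu_n(x)\le\vartheta_n(2M+\vartheta_n)$, and since $\vartheta_n\to 0$ is bounded this provides $\sum_n\chi_n\nu_n(x)<\pinf$. Hence all conclusions of Theorem~\ref{t:1} are in force.

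Parts \ref{c:1i}, \ref{c:1ii}, and \ref{c:1iii} are then immediate: \ref{c:1i} is Theorem~\ref{t:1}\ref{t:1ivb} (using $[\mu_{n,j}\mu_{n,k}]^+=\mu_{n,j}\mu_{n,k}$), \ref{c:1ii} is Theorem~\ref{t:1}\ref{t:1ivc}, and \ref{c:1iii} is exactly Theorem~\ref{t:1}\ref{t:1ii}.

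The substantive step is part \ref{c:1iii'}, where the relaxation bound $\lambda_n\le\varepsilon+(1-\varepsilon)/\phi_n$ is weaker than the one required in Theorem~\ref{t:1}\ref{t:1ii} and must be compensated by $\phi^*:=\sup_n\phi_n<1$. I compute $1/\phi_n-\lambda_n\ge\varepsilon(1-\phi_n)/\phi_n$, hence
\[
\frac{1/\phi_n-\lambda_n}{\lambda_n}\ge\frac{\varepsilon(1-\phi_n)}{\varepsilon\phi_n+1-\varepsilon}\ge\frac{\varepsilon(1-\phi^*)}{\varepsilon\phi^*+1-\varepsilon}=:c>0.
\]
Multiplying this by $\lambda_n^2\|T_n\overline{x}_n-\overline{x}_n\|^2$ and applying Theorem~\ref{t:1}\ref{t:1iva} gives $c\lambda_n^2\|T_n\overline{x}_n-\overline{x}_n\|^2\to 0$, and therefore $\lambda_n\|T_n\overline{x}_n-\overline{x}_n\|\to 0$; combined with \eqref{e:zn} and $\lambda_n\|e_n\|\le\vartheta_n\to 0$, this forces $x_{n+1}-\overline{x}_n\to 0$. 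Weak cluster points of $(x_n)$ then coincide with those of $(\overline{x}_n)$, hence lie in $S$, and Theorem~\ref{t:1}\ref{t:1i} together with Opial's lemma \cite[Lemma~2.47]{Livre1} yields $x_n\weakly x$ for some $x\in S$.

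For \ref{c:1iv}, I specialise part \ref{c:1i} by restricting the double sum to $k=n$: this gives $\mu_{n,n}\sum_{j=0}^n\mu_{n,j}\|x_j-x_n\|^2\to 0$, and since $\inf_n\mu_{n,n}>0$, convexity of $\|\cdot\|^2$ under the convex weights $(\mu_{n,j})_{0\le j\le n}$ delivers $\|\overline{x}_n-x_n\|^2\le\sum_j\mu_{n,j}\|x_j-x_n\|^2\to 0$; the weak-convergence conclusion then follows by the same Opial argument as in \ref{c:1iii'}. The only real obstacle is the estimate for \ref{c:1iii'}: extracting $\lambda_n\|T_n\overline{x}_n-\overline{x}_n\|\to 0$ from the sublinear bound in Theorem~\ref{t:1}\ref{t:1iva} without any a priori lower bound on $\lambda_n$; the role of the assumption $\sup_n\phi_n<1$ is precisely to make $(1/\phi_n-\lambda_n)/\lambda_n$ bounded below by a positive constant.
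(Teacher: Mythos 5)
Your proposal is correct, and in two places it departs from the paper's route in ways worth recording. The preliminary verification of \eqref{e:1a} is sound (the paper instead feeds $\xi_n=\|x_n-x\|$, $\varepsilon_n=\vartheta_n$ directly into condition \ref{algo:1e} of Algorithm~\ref{algo:1} to get convergence, hence boundedness, of $(\|x_n-x\|)_{n\in\NN}$; your max-recursion gives boundedness just as well), and parts \ref{c:1i}--\ref{c:1iii} are indeed immediate from Theorem~\ref{t:1}\ref{t:1ivb}, \ref{t:1ivc}, and \ref{t:1ii}. For \ref{c:1iii'} the paper does not redo the asymptotic argument: it sets $\delta=\varepsilon(1-\sup_{n\in\NN}\phi_n)\in\left]0,\varepsilon\right[$ and checks that $\varepsilon+(1-\varepsilon)/\phi_n\leq(1-\delta)/\phi_n$, so that the hypothesis of \ref{c:1iii} holds with $\delta$ in place of $\varepsilon$ and the conclusion is inherited verbatim; your direct route---lower-bounding $(1/\phi_n-\lambda_n)/\lambda_n$ by a positive constant $c$ and squeezing $\lambda_n^2\|T_n\overline{x}_n-\overline{x}_n\|^2$ against Theorem~\ref{t:1}\ref{t:1iva}---is equally valid and makes explicit why $\sup_{n\in\NN}\phi_n<1$ compensates for the weaker relaxation bound, at the cost of repeating the cluster-point/Opial step. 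For \ref{c:1iv} your argument is genuinely simpler than the paper's: restricting the nonnegative double sum of \ref{c:1i} to $k=n$, dividing by $\inf_{n\in\NN}\mu_{n,n}>0$, and applying Jensen's inequality to the convex weights $(\mu_{n,j})_{0\leq j\leq n}$ replaces the paper's computation via Apollonius' identity and the auxiliary weights $(\gamma_{n,j})$, while delivering exactly the same estimate $\|\overline{x}_n-x_n\|^2\leq\sum_{j=0}^n\mu_{n,j}\|x_j-x_n\|^2\to 0$. No gaps.
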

\begin{proof}
We derive from Theorem~\ref{t:1}\ref{t:1-i} that
$(\forall n\in\NN)$
$\|x_{n+1}-x\|\leq\sum_{j=0}^n\mu_{n,j}\|x_j-x\|+\vartheta_n$.
In turn, it follows from condition \ref{algo:1e} in 
Algorithm~\ref{algo:1} that 
$(\|x_n-x\|)_{n\in\NN}$ converges.
As a result, $(x_n)_{n\in\NN}$ is bounded and 
\eqref{e:1f} therefore implies \eqref{e:1a}. 

\ref{c:1i}--\ref{c:1iii}: These follow respectively from items
\ref{t:1ivb}, \ref{t:1ivc}, and \ref{t:1ii} in Theorem~\ref{t:1}.

\ref{c:1iii'}:
Set $\delta=\varepsilon(1-\sup_{n\in\NN}\phi_n)$.
Then $\delta\in\left]0,\varepsilon\right[$ and
$(\forall n\in\NN)$ $(\varepsilon-\delta)/\phi_n\geq\varepsilon$.
Hence, 
\begin{equation}
(\forall n\in\NN)\quad\lambda_n
\leq\varepsilon+\frac{1-\varepsilon}{\phi_n}
=\varepsilon+\dfrac{1-\delta}{\phi_n}
-\dfrac{\varepsilon-\delta}{\phi_n}
\leq\dfrac{1-\delta}{\phi_n}. 
\end{equation}
The claim therefore follows from \ref{c:1iii}. 

\ref{c:1iv}: Set
\begin{equation}
\label{24g9h8Hkj2-08d}
\theta=\dfrac{1}{\displaystyle{\inf_{n\in\NN}}\:\mu_{n,n}}
\quad\text{and}\quad
(\forall n\in\NN)(\forall j\in\{0,\ldots,n\})\quad
\gamma_{n,j}=
\begin{cases}
\dfrac{\mu_{n,n}+1}{2},&\text{if}\;\;j=n;\\[3mm]
\dfrac{\mu_{n,j}}{2},&\text{if}\;\;j<n.
\end{cases}
\end{equation}
Then, using Apollonius' identity, \cite[Lemma~2.12(iv)]{Livre1}, 
and \eqref{24g9h8Hkj2-08d}, we obtain
\begin{align}
\label{e:rdu-cdg}
\dfrac{1}{4}\|\overline{x}_n-x_n\|^2
&=\dfrac{1}{2}\Big(\|\overline{x}_n-x\|^2+\|x_n-x\|^2\Big)-
\bigg\|\dfrac{\overline{x}_n+x_n}{2}-x\bigg\|^2\nonumber\\
&=\dfrac{1}{2}\Big(\|\overline{x}_n-x\|^2+\|x_n-x\|^2\Big)
-\bigg\|\sum_{j=0}^n\gamma_{n,j}(x_j-x)\bigg\|^2\nonumber\\
&=\dfrac{1}{2}\Big(\|\overline{x}_n-x\|^2+\|x_n-x\|^2\Big)
-\sum_{j=0}^n\gamma_{n,j}\|x_j-x\|^2+\sum_{0\leq j<k\leq n}
\gamma_{n,j}\gamma_{n,k}\|x_j-x_k\|^2\nonumber\\
&\leq\dfrac{1}{2}\Bigg(\sum_{j=0}^n\mu_{n,j}\|x_j-x\|^2
+\|x_n-x\|^2\Bigg)-\sum_{j=0}^n\gamma_{n,j}\|x_j-x\|^2\nonumber\\
&\quad\;+\dfrac{1}{4}\Bigg(\sum_{0\leq j<k< n}\mu_{n,j}\mu_{n,k}
\|x_j-x_k\|^2+\sum_{j=0}^{n-1}\mu_{n,j}(\mu_{n,n}+1)
\|x_j-x_n\|^2\Bigg)\nonumber\\
&\leq\dfrac{1}{2}\Bigg(\sum_{j=0}^n\mu_{n,j}\|x_j-x\|^2
+\|x_n-x\|^2\Bigg)-\sum_{j=0}^n\gamma_{n,j}\|x_j-x\|^2\nonumber\\
&\quad\;+\dfrac{1}{4}\Bigg(\sum_{0\leq j<k\leq n}\mu_{n,j}\mu_{n,k}
\|x_j-x_k\|^2+\theta\sum_{j=0}^{n-1}\mu_{n,j}\mu_{n,n}
\|x_j-x_n\|^2\Bigg)\nonumber\\
&\leq\dfrac{1}{2}\Bigg(\sum_{j=0}^n\mu_{n,j}\|x_j-x\|^2
+\|x_n-x\|^2\Bigg)-\sum_{j=0}^n\gamma_{n,j}\|x_j-x\|^2\nonumber\\
&\quad\;+\dfrac{1+\theta}{4}\sum_{0\leq j<k\leq n}\mu_{n,j}\mu_{n,k}
\|x_j-x_k\|^2.
\end{align}
Next, let us set $\rho=\lim\|x_n-x\|^2$. Then it follows from
Lemma~\ref{l:lem2} that $\sum_{j=0}^n\mu_{n,j}\|x_j-x\|^2\to\rho$ 
and $\sum_{j=0}^n\gamma_{n,j}\|x_j-x\|^2\to\rho$. 
On the other hand, \ref{c:1i} asserts that
$\sum_{0\leq j<k\leq n}\mu_{n,j}\mu_{n,k}\|x_j-x_k\|^2\to 0$.
Altogether, \eqref{e:rdu-cdg} yields
$\|\overline{x}_n-x_n\|\to 0$. 
Thus, the weak sequential cluster points of $(x_n)_{n\in\NN}$ 
belong to $S$, and the conclusion follows from 
the fact that $(\|x_n-x\|)_{n\in\NN}$ converges
and \cite[Lemma~2.47]{Livre1}.
\end{proof}

\begin{corollary}
\label{c:2}
Consider the setting of Algorithm~\ref{algo:1} with 
$(\forall i\in\{1,\ldots,m\})(\forall n\in\NN)$ $ e_{i,n}=0$.
Set $x_{-1}=x_0$ and suppose that there exists a sequence 
$(\eta_n)_{n\in\NN}$ in $\lzeroun$ such that $\eta_0=0$ and
\begin{equation}
\label{e:matinertiel}
(\forall n\in\NN)(\forall j\in\{0,\ldots,n\})\quad\mu_{n,j}=
\begin{cases}
1+\eta_n,&\text{if}\;\:j=n;\\
-\eta_n,&\text{if}\;\:j=n-1;\\
0,&\text{if}\;\:j<n-1.
\end{cases}
\end{equation}
For every $n\in\NN$, set
\begin{equation}
\label{24g9h8Hkj2-09D}
(\forall k\in\NN)\quad\zeta_{k,n}=
\begin{cases}
0,&\text{if}\;\:k\leq n;\\
\Sum_{j=n+1}^{k}(\eta_j-1),&\text{if}\;\:k>n,
\end{cases}
\end{equation}
and assume that
$\chi_n=\sum_{k\geq n}\exp(\zeta_{k,n})$.
Suppose that one of the following is satisfied:
\begin{enumerate}[label=\rm(\alph*)]
\itemsep0mm 
\item 
\label{c:2a}
$\sum_{n\in\NN}\chi_n\eta_n\|x_n-x_{n-1}\|^2<\pinf$.
\item 
\label{c:2b} 
$\sum_{n\in\NN} n\|x_n-x_{n-1}\|^2<\pinf$ 
and there exists $\tau\in\left[2,\pinf\right[$ such that 
$(\forall n\in\NN\smallsetminus\{0\})$ $\eta_n=(n-1)/(n+\tau)$. 
\item 
\label{c:2c}
$\sum_{n\in\NN}\eta_n\|x_n-x_{n-1}\|^2<\pinf$ 
and there exists $\eta\in\left[0,1\right[$ such that 
$(\forall n\in\NN)$ $\eta_n\leq\eta$.
\item 
\label{c:2d} 
Set $(\forall n\in\NN)$ $\omega_n=1/\phi_n-\lambda_n$.
There exist $(\sigma,\vartheta)\in\left]0,\pinf\right[^2$
and $\eta\in\left]0,1\right[$ such that
\begin{equation}
(\forall n\in\NN)\quad
\begin{cases}
\eta_{n}\leq\eta_{n+1}\leq\eta\\
\lambda_n\leq 
\dfrac{\vartheta/\phi_n-\eta\big(\eta(1+\eta)
+\eta\vartheta\omega_{n+1}+\sigma\big)}
{\vartheta\big(1+\eta(1+\eta)+
\eta\vartheta\omega_{n+1}+\sigma\big)}\\
\dfrac{\eta^2(1+\eta)+\eta\sigma}{\vartheta}
<\dfrac{1}{\phi_n}-\eta^2\omega_{n+1}.
\end{cases}
\end{equation}
\end{enumerate}
Then the following hold:
\begin{enumerate}
\itemsep0mm 
\item
\label{c:2i0}
$\lambda_n(1/\phi_n-\lambda_n)
\|T_n\overline{x}_n-\overline{x}_n\|^2\to 0$.
\item
\label{c:2i}
Let $x\in S$ and suppose that 
$(\exi\varepsilon\in\zeroun)(\forall n\in\NN)$ 
$\lambda_n\leq\varepsilon+(1-\varepsilon)/\phi_n$. Then
\[
\lambda_n\underset{1\leq i\leq m}{\text{\rm max}}
\dfrac{1-\alpha_{i,n}}{\alpha_{i,n}}
\left\|(\Id-T_{i,n})T_{i+,n}\overline{x}_n-(\Id-T_{i,n})T_{i+,n}x
\right\|^2\to 0.
\]
\item
\label{c:2ii0}
$\overline{x}_n-x_n\to 0$.
\item
\label{c:2ii}
Suppose that every weak sequential cluster point of 
$(\overline{x}_n)_{n\in\NN}$ is in $S$.
Then there exists $x\in S$ such that $x_n\weakly x$.
\end{enumerate}
\end{corollary}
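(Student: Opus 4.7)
The plan is to verify the abstract hypothesis \eqref{e:1a} of Theorem~\ref{t:1} under each of \ref{c:2a}--\ref{c:2d} and then extract \ref{c:2i0}--\ref{c:2ii} from that theorem. Since $e_{i,n}=0$ for every $i$ and $n$, we have $\vartheta_n=0$ and $\nu_n\equiv 0$, so the second half of \eqref{e:1a} is immediate. For the first half, the inertial array \eqref{e:matinertiel} has only two nonzero negative products, at $(j,k)=(n,n-1)$ and $(n-1,n)$, yielding
\begin{equation}
\Sum_{j=0}^n\Sum_{k=0}^n[\mu_{n,j}\mu_{n,k}]^-\|x_j-x_k\|^2
=2\eta_n(1+\eta_n)\|x_n-x_{n-1}\|^2\leq 4\eta_n\|x_n-x_{n-1}\|^2,
\end{equation}
so it suffices to establish $\Sum_{n\in\NN}\chi_n\eta_n\|x_n-x_{n-1}\|^2<\pinf$. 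Under \ref{c:2a} this is the hypothesis. Under \ref{c:2b}, Lemma~\ref{l:chi}\ref{l:chi_i} gives $\chi_n\leq(n+7)/2$, so the series is dominated by a multiple of $\Sum_n n\|x_n-x_{n-1}\|^2<\pinf$. Under \ref{c:2c}, Lemma~\ref{l:chi}\ref{l:chi_ii} gives $\chi_n\leq e/(1-\eta)$, reducing the claim to the hypothesis.

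Case \ref{c:2d} is the crux and I would handle it through Lemma~\ref{l:1} applied with $\beta_n=\|x_n-x\|^2$ for an arbitrary $x\in S$, $\delta_n=\|x_n-x_{n-1}\|^2$, and $\gamma_n=\eta_n(1+\eta_n)+\vartheta\eta_n\omega_n$. Hypotheses \ref{h:l1c}, \ref{h:l1a}, and \ref{h:l1d} of Lemma~\ref{l:1} are built into \ref{c:2d}, while \ref{h:l1b} holds because $\eta_n\leq\eta$ and $\omega_n\geq 0$ (the latter from $\lambda_n\leq 1/\phi_n$ in Algorithm~\ref{algo:1}). The decisive step is \ref{h:l1e}: specializing Theorem~\ref{t:1}\ref{t:1-ii} to the inertial array produces
\begin{equation}
\beta_{n+1}-\beta_n-\eta_n(\beta_n-\beta_{n-1})\leq\eta_n(1+\eta_n)\|x_n-x_{n-1}\|^2
-\lambda_n(1/\phi_n-\lambda_n)\|T_n\overline{x}_n-\overline{x}_n\|^2,
\end{equation}
while the identity $x_{n+1}-x_n=\eta_n(x_n-x_{n-1})+\lambda_n(T_n\overline{x}_n-\overline{x}_n)$ combined with the weighted triangle inequality $\|u+v\|^2\leq(1+c)\|u\|^2+(1+1/c)\|v\|^2$ at $c=\vartheta\lambda_n/\eta_n$ yields
\begin{equation}
\dfrac{\vartheta}{\eta_n+\vartheta\lambda_n}\|x_{n+1}-x_n\|^2\leq\vartheta\eta_n\|x_n-x_{n-1}\|^2
+\lambda_n\|T_n\overline{x}_n-\overline{x}_n\|^2.
\end{equation}
Multiplying this by $(1/\phi_n-\lambda_n)\geq 0$ to produce a lower bound on $\lambda_n(1/\phi_n-\lambda_n)\|T_n\overline{x}_n-\overline{x}_n\|^2$, then substituting into the preceding inertial inequality, gives exactly the form required by \ref{h:l1e}, since $\eta_n/(\eta_n+\vartheta\lambda_n)-1=-\vartheta\lambda_n/(\eta_n+\vartheta\lambda_n)$. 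Lemma~\ref{l:1} then delivers $\Sum_{n\in\NN}\|x_n-x_{n-1}\|^2<\pinf$, and Lemma~\ref{l:chi}\ref{l:chi_ii} (valid because $\eta_n\leq\eta<1$) converts this into the required summability.

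With \eqref{e:1a} in hand, \ref{c:2i0} and \ref{c:2i} follow from Theorem~\ref{t:1}\ref{t:1iva} and \ref{t:1ivc}, respectively. For \ref{c:2ii0}, each of the four cases yields $\Sum_n\eta_n\|x_n-x_{n-1}\|^2<\pinf$ via the bounds obtained above (using $\chi_n\geq 1$ in \ref{c:2a}, $\eta_n\leq 1\leq n$ in \ref{c:2b}, the hypothesis directly in \ref{c:2c}, and $\eta_n\leq\eta$ applied to $\Sum_n\|x_n-x_{n-1}\|^2<\pinf$ in \ref{c:2d}); consequently $\eta_n\|x_n-x_{n-1}\|^2\to 0$, and since $\eta_n\in\lzeroun$ implies $\eta_n^2\leq\eta_n$, we conclude $\|\overline{x}_n-x_n\|^2=\eta_n^2\|x_n-x_{n-1}\|^2\to 0$. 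Finally, $\overline{x}_n-x_n\to 0$ forces $(x_n)_{n\in\NN}$ and $(\overline{x}_n)_{n\in\NN}$ to share weak sequential cluster points, all of which lie in $S$ by assumption; combined with the Fej\'er-type convergence of $(\|x_n-x\|)_{n\in\NN}$ from Theorem~\ref{t:1}\ref{t:1i}, Opial's lemma \cite[Lemma~2.47]{Livre1} yields \ref{c:2ii}. The main obstacle throughout is the precise calibration in case \ref{c:2d}: the Cauchy-Schwarz weight $c=\vartheta\lambda_n/\eta_n$ is finely tuned so that the coefficient multiplying $\lambda_n\|T_n\overline{x}_n-\overline{x}_n\|^2$ aligns with the somewhat intricate algebraic form baked into Lemma~\ref{l:1}\ref{h:l1e}.
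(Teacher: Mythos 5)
Your proposal is correct and follows essentially the same route as the paper: verify the hypothesis \eqref{e:1a} of Theorem~\ref{t:1} (with $\nu_n\equiv 0$), reduce cases \ref{c:2b} and \ref{c:2c} to \ref{c:2a} via Lemma~\ref{l:chi}, handle \ref{c:2d} through Lemma~\ref{l:1} with exactly the same choices $\beta_n=\|x_n-x\|^2$, $\delta_n=\|x_n-x_{n-1}\|^2$, $\gamma_n=\eta_n(1+\eta_n)+\eta_n\vartheta\omega_n$, and the Young-inequality weight $\rho_n=1/(\eta_n+\vartheta\lambda_n)$, and then conclude from Theorem~\ref{t:1} together with $\|\overline{x}_n-x_n\|=\eta_n\|x_n-x_{n-1}\|$ and Opial's lemma. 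The only (harmless) omission is an explicit appeal to Example~\ref{ex:main} to certify that the inertial array with this choice of $(\chi_n)_{n\in\NN}$ satisfies conditions \ref{algo:1a}--\ref{algo:1e} of Algorithm~\ref{algo:1}.
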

\begin{proof}
In view of Example~\ref{ex:main},
$(\mu_{n,j})_{n\in\NN, 0\leq j\leq n}$ satisfies conditions
\ref{algo:1a}--\ref{algo:1e} in Algorithm~\ref{algo:1}. 

\ref{c:2a}: 
Set $\chi=\inf_{n\in\NN}\chi_n$ and define $(\nu_n)_{n\in\NN}$ 
as in \eqref{Kj8Ygf2-21t}. 
We have $\sup_{n\in\NN} (1+\eta_n)\leq 2$ and
\begin{equation}
(\forall n\in\NN)\quad
\begin{cases}
\chi_n\sum_{j=0}^n\sum_{k=0}^n
[\mu_{n,j}\mu_{n,k}]^-\|x_j-x_k\|^2=
(1+\eta_n)\chi_n\eta_n\|x_n-x_{n-1}\|̂^2\\
(\forall x\in S)\quad\chi_n\nu_n(x)=0.
\end{cases}
\end{equation}
Hence \eqref{e:1a} holds, and \ref{c:2i0} and \ref{c:2i} follow from 
Theorem~\ref{t:1}\ref{t:1iva}\&\ref{t:1ivc}, respectively. 
Furthermore, \eqref{e:matinertiel} implies that
\begin{equation}
\|\overline{x}_n-x_n\|^2\leq 
\eta_n^2\|x_n-x_{n-1}\|^2
\leq\eta_n\|x_n-x_{n-1}\|^2
\leq\frac{\chi_n\eta_n}{\chi}\|x_n-x_{n-1}\|^2\to 0.
\end{equation}
Thus, \ref{c:2ii0} holds. In turn, the weak sequential
cluster points of $(x_n)_{n\in\NN}$ belong to $S$ and 
\ref{c:2ii} therefore follows from Theorem~\ref{t:1}\ref{t:1i} and
\cite[Lemma~2.47]{Livre1}.

\ref{c:2b}$\Rightarrow$\ref{c:2a}: 
It follows from Lemma~\ref{l:chi}\ref{l:chi_i} that
\begin{equation}
\Sum_{n\in\NN}\chi_n\eta_n\|x_n-x_{n-1}\|^2\leq
\Sum_{n\in\NN}\dfrac{n+7}{2}\|x_n-x_{n-1}\|^2<\pinf.
\end{equation}

\ref{c:2c}$\Rightarrow$\ref{c:2a}: 
Lemma~\ref{l:chi}\ref{l:chi_ii} asserts that
$\sup_{n\in\NN}\chi_n\leq e/(1-\eta)$.

\ref{c:2d}$\Rightarrow$\ref{c:2c}:
Let $x\in S$. It follows from Theorem~\ref{t:1}\ref{t:1-ii} that
\begin{align}
(\forall n\in\NN)\quad\|x_{n+1}-x\|^2\leq &(1+\eta_n)\|x_n-
x\|^2-\eta_n\|x_{n-1}-
x\|^2+\eta_n(1+\eta_n)\|x_{n}-x_{n-1}\|^2\nonumber\\
\label{e:cin1}
&-\lambda_n\bigg(\frac{1}{\phi_n}-\lambda_n\bigg)
\|T_n\overline{x}_n-\overline{x}_n\|^2.
\end{align}
Now set $\beta_{-1}=\|x_{0}-x\|^2$ and 
\begin{equation}
\label{e:defpdp}
(\forall n\in\NN)\quad\beta_n=\|x_{n}-x\|^2,\quad
\delta_n=\|x_{n}-x_{n-1}\|^2,\quad\text{and}\quad
\rho_n=\dfrac{1}{\eta_n+\lambda_n\vartheta}.
\end{equation}
Then
\begin{align}
(\forall n\in\NN)\quad\|T_n\overline{x}_n-\overline{x}_n\|^2
&=\dfrac{1}{\lambda_n^2}\big\|(x_{n+1}-x_n)+\eta_n (x_{n-1}-x_n)
\big\|^2\nonumber\\
&=\dfrac{1}{\lambda_n^2}\bigg(\delta_{n+1}+\eta_n^2\delta_{n}
+\eta_n\bigg(2\Scal{\sqrt{\rho_n}(x_{n+1}-x_n)}
{\frac{x_{n-1}-x_n}{\sqrt{\rho_n}}}\bigg)\bigg)\nonumber\\
&\geq\dfrac{1}{\lambda_n^2}\bigg(\delta_{n+1}+
\eta_n^2\delta_{n}-\eta_n\bigg(\rho_n\delta_{n+1}+
\dfrac{\delta_n}{\rho_n}\bigg)\bigg).
\end{align}
Thus, we derive from \eqref{e:cin1} that
\begin{equation} 
(\forall n\in\NN)\quad
\beta_{n+1}-\beta_n-\eta_n(\beta_n-\beta_{n-1})
\leq\dfrac{(1/\phi_n-\lambda_n)(\eta_n\rho_n-1)}
{\lambda_n}\delta_{n+1}+\gamma_n\delta_n,
\end{equation}
where 
\begin{equation}
\label{e:cingamman}
(\forall n\in\NN)\quad
\gamma_n=\eta_n(1+\eta_n)+\eta_n
\bigg(\dfrac{1}{\phi_n}-\lambda_n\bigg)
\dfrac{1-\rho_n\eta_n}{\rho_n\lambda_n}\geq 0.
\end{equation}
However, it follows from \eqref{e:defpdp} that
$(\forall n\in\NN)$ $\vartheta=
(1-\rho_n\eta_n)/(\rho_n\lambda_n)$.
Hence, \eqref{e:cingamman} yields
\begin{equation}
(\forall n\in\NN)\quad\gamma_{n}=\eta_{n}(1+\eta_{n})+
\eta_{n}\bigg(\dfrac{1}{\phi_{n}}-\lambda_{n}\bigg)\vartheta 
\leq\eta(1+\eta)+\eta\vartheta\omega_{n}.
\end{equation}
Thus, by Lemma~\ref{l:1}, 
$\sum_{n\in\NN}\eta_n\delta_n\leq\sum_{n\in\NN}\delta_n<\pinf$ 
and we conclude that \ref{c:2c} is satisfied.
\end{proof}

\begin{remark}
\label{r:solsbury}
In Corollary~\ref{c:2}, no errors were allowed in the
implementation of the operators. It is however possible to allow
errors in multi-layer inertial methods in certain scenarios. For
instance, suppose that in Corollary~\ref{c:2} we make the
additional assumptions that $\lambda_n\equiv 1$ and that
$\bigcup_{n\in\NN}\ran T_{1,n}$ is bounded. At the same time,
let us introduce errors of such that 
$(\forall i\in\{1,\ldots,m\})$
$\sum_{n\in\NN}\chi_n\|e_{i,n}\|<\pinf$. Note that 
\eqref{Kj8Ygf2-21s} becomes
\begin{equation}
\label{Kj8Ygf2-21S}
\begin{array}{l}
\text{for}\;n=0,1,\ldots\\
\left\lfloor
\begin{array}{l}
\overline{x}_n=(1+\eta_n)x_n-\eta_nx_{n-1}\\
x_{n+1}=T_{1,n}\Big(T_{2,n}
\big(\cdots T_{m-1,n}(T_{m,n}\overline{x}_n\!+\!e_{m,n})
\!+\!e_{m-1,n}\cdots\big)\!+\!e_{2,n}\Big)
\!+\!e_{1,n}.
\end{array}
\right.\\
\end{array}
\end{equation}
Hence, the assumptions imply that $(x_n)_{n\in\NN}$ is bounded.
In turn, $(\overline{x}_n)_{n\in\NN}$ is bounded and it follows from
\eqref{Kj8Ygf2-21t} that $(\forall x\in S)$
$\sum_{n\in\NN}\chi_n\nu_n(x)<\pinf$. An inspection of the proof of
Corollary~\ref{c:2} then reveals immediately that its conclusions
under any of assumptions \ref{c:2a}--\ref{c:2c} remain true.
\end{remark}

\section{Examples and Applications}
\label{sec:4}

In this section we exhibit various existing results as special
cases of our framework. Our purpose is not to exploit it to its 
full capacity but rather to illustrate its potential on simple 
instances. We first recover the main result of \cite{Jmaa02} on 
algorithm \eqref{e:3}.

\begin{example}
\label{ex:jmaa02}
We consider the setting studied in \cite{Jmaa02}.
Let $(T_n)_{n\in\NN}$ be a sequence of firmly quasinonexpansive
operators from $\HH$ to $\HH$ such that 
$S=\bigcap_{n\in\NN}\Fix T_n\neq\emp$. Then the problem of finding 
a point in $S$ is a special case of 
Problem~\ref{prob:1}\ref{prob:1c} 
where we assume that $\alpha_{1,n}\equiv 1/2$. In addition,
let $(e_n)_{n\in\NN}$ be a sequence in $\HH$ such that
$\sum_{n\in\NN}\|e_n\|<\pinf$, let
$(\lambda_n)_{n\in\NN}$ be a sequence in $\left]0,2\right[$ such
that $0<\inf_{n\in\NN}\lambda_n\leq\sup_{n\in\NN}\lambda_n<2$,
and let $(\mu_{n,j})_{n\in\NN, 0\leq j\leq n}$ be an array with
entries in $\RP$ which satisfies the following:
\begin{enumerate}[label=\rm(\alph*)]
\itemsep0mm 
\item
\label{a:7a}
$(\forall n\in\NN)$ $\sum_{j=0}^n\mu_{n,j}=1$.
\item
\label{a:7b}
$(\forall j\in\NN)$
$\lim\limits_{\substack{n\to\pinf}}\mu_{n,j}=0$.
\item
\label{a:7c}
Every sequence $(\xi_n)_{n\in\NN}$ in $\RP$ such that
\begin{equation}
\Big(\exi(\varepsilon_n)_{n\in\NN}\in\RP^\NN\Big)\:\;
\begin{cases}
\sum_{n\in\NN}\varepsilon_n<\pinf\\
(\forall n\in\NN)\quad 
\xi_{n+1}\leq\sum_{j=0}^n\mu_{n,j}\xi_j+\varepsilon_n
\end{cases}
\end{equation} 
converges.
\end{enumerate}
Clearly, conditions \ref{a:7a}--\ref{a:7c} above imply that, in 
Algorithm~\ref{algo:1}, conditions \ref{algo:1a}--\ref{algo:1e} 
are satisfied. Now let $x_0\in\HH$, and define a 
sequence $(x_n)_{n\in\NN}$ by
\begin{equation}
\label{e:w3}
(\forall n\in\NN)\quad
x_{n+1}=\overline{x}_n+\lambda_n\big(T_n\overline{x}_n+e_n-
\overline{x}_n\big),
\;\;\text{where}\;\;
\overline{x}_n=\Sum_{j=0}^n\mu_{n,j}x_j,
\end{equation}
which corresponds to a 1-layer instance of \eqref{Kj8Ygf2-21s}.
This mean iteration process was seen in \cite{Jmaa02} to cover
several classical mean iteration methods, as well as memoryless
convex feasibility algorithms \cite{Else01} (see also
\cite{Borw17}).  The result obtained in
\cite[Theorem~3.5(i)]{Jmaa02} on the weak convergence of
$(x_n)_{n\in\NN}$ to a point in $S$ corresponds to the special
case of Corollary~\ref{c:1}\ref{c:1iii} in which we further set
$\chi_{n}\equiv 1$. 
\end{example}

Next, we retrieve the main result of \cite{Jmaa15} on the
convergence of an overrelaxed version of \eqref{e:6} and the
special cases discussed there, in particular those of \cite{Opti04}. 

\begin{example}
\label{ex:jmaa15}
We consider the setting studied in \cite{Jmaa15}, which
corresponds to Problem~\ref{prob:1}\ref{prob:1a}. Given
$x_0\in\HH$ and sequences $(e_{1,n})_{n\in\NN}$, \ldots,
$(e_{m,n})_{n\in\NN}$ in $\HH$ such that 
$\sum_{n\in\NN}\lambda_n\sum_{i=1}^m\|e_{i,n}\|<\pinf$,
construct a sequence $(x_n)_{n\in\NN}$ via the $m$-layer recursion
\begin{multline}
\label{e:w6}
(\forall n\in\NN)\quad
x_{n+1}=x_n+\lambda_n\Big(T_{1,n}\Big(T_{2,n}
\big(\cdots T_{m-1,n}(T_{m,n}{x}_n+e_{m,n})
+e_{m-1,n}\cdots\big)+e_{2,n}\Big)+e_{1,n}-{x}_n\Big),\\
\quad\text{where}\quad
0<\lambda_n\leq\varepsilon+(1-\varepsilon)/\phi_n.
\end{multline}
Note that \eqref{e:w6} corresponds the memoryless version of
\eqref{Kj8Ygf2-21s}. The result on the weak convergence of 
$(x_n)_{n\in\NN}$ obtained in 
\cite[Theorem~3.5(iii)]{Jmaa15} corresponds to the special case 
of Corollary~\ref{c:1}\ref{c:1iii'} in which the following
additional assumptions are made: 
\begin{enumerate}[label=\rm(\alph*)]
\itemsep0mm 
\item
\label{ex:jmaa15a}
$(\forall n\in\NN)$ $\chi_n=1$ and 
$(\forall j\in\{0,\ldots,n\})$ $\mu_{n,j}=\delta_{n,j}$.
\item
\label{ex:jmaa15c}
$(\exi\varepsilon\in\zeroun)(\forall n\in\NN)$ $\lambda_n
\leq(1-\varepsilon)(1/\phi_n+\varepsilon)$.
\end{enumerate}
Note that condition \ref{ex:jmaa15a} above 
implies that, in Algorithm~\ref{algo:1}, conditions 
\ref{algo:1a}--\ref{algo:1c} trivially hold, while condition 
\ref{algo:1e} follows from \cite[Lemma~5.31]{Livre1}. 
We also observe that \cite[Theorem~3.5(iii)]{Jmaa15} itself 
extends the results of \cite[Section~3]{Opti04}, where the
relaxation parameters $(\lambda_n)_{n\in\NN}$ are confined 
to $\rzeroun$.
\end{example}

The next two examples feature mean value and inertial iterations
in the case of a single quasinonexpansive operator. As is easily
seen, the memoryless algorithm \eqref{e:1} can fail to produce a
convergent sequence in this scenario. 

\begin{example}
\label{ex:Mann}
Let $T\colon\HH\to\HH$ be a quasinonexpansive operator such that 
$\Id-T$ is demiclosed at $0$ and $\Fix T\neq\emp$,
let $(\mu_{n,j})_{n\in\NN, 0\leq j\leq n}$ be an array in $\RP$ that
satisfies conditions \ref{algo:1a}--\ref{algo:1e} in 
Algorithm~\ref{algo:1} with $\chi_n\equiv 1$ and such that
$\inf_{n\in\NN}\;\mu_{n+1,n}\mu_{n+1,n+1}>0$,
and let $(e_n)_{n\in\NN}$ be a sequence in $\HH$ such that
$\sum_{n\in\NN}\|e_n\|<\pinf$. Let $x_0\in\HH$ and iterate
\begin{equation}
\label{ex:Mann1}
(\forall n\in\NN)\quad x_{n+1}=T\overline{x}_n+e_n,
\quad\text{where}\quad\overline{x}_n=\Sum_{j=0}^n\mu_{n,j}x_j.
\end{equation}
Then $T\overline{x}_n-\overline{x}_n\to 0$ and 
there exists $x\in\Fix T$ such that $x_n\weakly x$ and
$\overline{x}_n\weakly x$. 
\end{example}
\begin{proof}
We apply Corollary~\ref{c:1} in the setting of
Problem~\ref{prob:1}\ref{prob:1c} with $T_{1,n}\equiv T$,
$\alpha_{1,n}\equiv 1$, $\phi_{n}\equiv 1$, and
$\lambda_n\equiv 1$. 
First, note that \eqref{e:1f} is satisfied. Furthermore,
Corollary~\ref{c:1}\ref{c:1iv} entails that 
$\overline{x}_n-x_n\to 0$, while 
Corollary~\ref{c:1}\ref{c:1i} yields 
$\mu_{n+1,n}\mu_{n+1,n+1}\|x_{n+1}-x_n\|^2\to 0$ and hence
$x_{n+1}-x_n\to 0$. Therefore
$\overline{x}_n-T\overline{x}_n=
(\overline{x}_n-x_n)+(x_n-x_{n+1})+e_n\to 0$. 
Since $\Id-T$ is demiclosed at $0$, it follows that
every weak sequential cluster point of 
$(\overline{x}_n)_{n\in\NN}$ is in $\Fix T$. In view of 
Corollary~\ref{c:1}\ref{c:1iv}, the proof is complete. 
\end{proof}

\begin{example}
\label{ex:botcsetnek}
Let $T\colon\HH\to\HH$ be a quasinonexpansive operator such that 
$\Id-T$ is demiclosed at $0$ and $\Fix T\neq\emp$,
and let $(\eta_n)_{n\in\NN}$ be a sequence 
in $\lzeroun$ such that $\eta_0=0$, 
$\eta=\sup_{n\in\NN}\eta_n<1$, and 
$(\forall n\in\NN)$ $\eta_n\leq\eta_{n+1}$.
Let $(\sigma,\vartheta)\in\RPP^2$ be such that
$(\eta^2(1+\eta)+\eta\sigma)/\vartheta<1-\eta^2$,
and let $(\lambda_n)_{n\in\NN}$ be a
sequence in $\zeroun$ such that 
$0<\inf_{n\in\NN}\lambda_n\leq\sup_{n\in\NN}\lambda_n\leq 
(\vartheta-\eta(\eta(1+\eta)
+\eta\vartheta+\sigma))/
(\vartheta(1+\eta(1+\eta)+\eta\vartheta+\sigma))$.
Let $x_0\in\HH$, set $x_{-1}=x_0$, and iterate
\begin{equation}
(\forall n\in\NN)\quad x_{n+1}=\overline{x}_n+
\lambda_n\big(T\overline{x}_n 
-\overline{x}_n\big),
\quad\text{where}\quad
\overline{x}_n=(1+\eta_n)x_n-\eta_nx_{n-1}. 
\end{equation}
Then $T\overline{x}_n-\overline{x}_{n}\to 0$ and there exists
$x\in\Fix T$ such that $x_n\weakly x$. In the case when $T$ is 
nonexpansive, this result appears in 
\cite[Theorem~5]{Botc15}.
\end{example}
\begin{proof}
This is an instance of 
Corollary~\ref{c:2}\ref{c:2d}\ref{c:2i0}\&\ref{c:2ii} 
and Problem~\ref{prob:1}\ref{prob:1c} in which
$T_{1,n}\equiv T$,
$\alpha_{1,n}\equiv 1$, and $\phi_{n}\equiv 1$.
Note that condition \ref{c:2d} in Corollary~\ref{c:2} 
is satisfied since $(\forall n\in\NN)$ $\omega_n=1-\lambda_n<1$.
\end{proof}

Next, we consider applications to monotone operator splitting.
Let us recall basic notions about a set-valued operator
$A\colon\HH\to 2^{\HH}$ \cite{Livre1}.
We denote by $\ran A=\menge{u\in\HH}{(\exi x\in\HH)\;u\in Ax}$ 
the range of $A$, by 
$\dom A=\menge{x\in\HH}{Ax\neq\emp}$ the domain of $A$, by 
$\zer A=\menge{x\in\HH}{0\in Ax}$ the set of zeros of $A$, 
by $\gra A=\menge{(x,u)\in\HH\times\HH}{u\in Ax}$ the graph of 
$A$, and by $A^{-1}$ the inverse of $A$, i.e., the operator 
with graph
$\menge{(u,x)\in\HH\times\HH}{u\in Ax}$. The resolvent of $A$ is
$J_A=(\Id+A)^{-1}$ and $s\colon\dom A\to\HH$ is a
selection of $A$ if $(\forall x\in\dom A)$ $s(x)\in Ax$.
Moreover, $A$ is monotone if
\begin{equation}
(\forall (x,u)\in\gra A)(\forall (y,v)\in\gra A)\quad
\scal{x-y}{u-v}\geq 0,
\end{equation}
and maximally monotone if there exists no monotone operator 
$B\colon\HH\to 2^{\HH}$ such that $\gra A\subset\gra B\neq\gra A$.
In this case, $J_A$ is a firmly nonexpansive operator defined 
everywhere on $\HH$ and the reflector $R_A=2J_A-\Id$ is nonexpansive.
We denote by $\Gamma_0(\HH)$ the class of proper lower semicontinuous
convex functions from $\HH$ to $\RX$.
Let $f\in\Gamma_0(\HH)$. For every $x\in\HH$, $f+\|x-\cdot\|^2/2$
possesses a unique minimizer, which is denoted by $\prox_f x$.
We have $\prox_f=J_{\partial f}$, where
\begin{equation}
\partial f\colon\HH\to 2^\HH\colon x\mapsto
\menge{u\in\HH}{(\forall y\in\HH)\; 
\scal{y-x}{u}+f(x)\leq f(y)}
\end{equation}
is the Moreau subdifferential of $f$.
Our convergence results will rest on the following asymptotic
principle.

\begin{lemma}
\label{l:op1}
Let $A$ and $B$ be maximally monotone operators from $\HH$ to
$2^\HH$, let $(x_n,u_n)_{n\in\NN}$ be a sequence in $\gra A$,
let $(y_n,v_n)_{n\in\NN}$ be a sequence in $\gra B$, 
let $x\in\HH$, and let $v\in\HH$. Suppose that $x_n\weakly x$,
$v_n\weakly v$, $x_n-y_n\to 0$, and $u_n+v_n\to 0$.
Then the following hold:
\begin{enumerate}
\itemsep0mm 
\item 
\label{l:op1i}
$(x,-v)\in\gra A$ and $(x,v)\in\gra B$.
\item 
\label{l:op1ii}
$0\in Ax+Bx$ and $0\in-A^{-1}(-v)+B^{-1}v$.
\end{enumerate}
\end{lemma}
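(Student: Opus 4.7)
The plan is to reduce assertion~\ref{l:op1i} to a weak closedness property of graphs of maximally monotone operators, and then derive \ref{l:op1ii} by elementary algebra. First, I would recast the hypotheses as weak convergence of the appropriate pairs. Since $v_n\weakly v$ and $u_n+v_n\to 0$, writing $u_n=-v_n+(u_n+v_n)$ gives $u_n\weakly -v$; similarly, $x_n\weakly x$ and $x_n-y_n\to 0$ yield $y_n\weakly x$. Thus $(x_n,u_n)\in\gra A$ converges weakly to $(x,-v)$ and $(y_n,v_n)\in\gra B$ converges weakly to $(x,v)$.

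The crucial quantitative step is the identity
\[
\scal{x_n}{u_n}+\scal{y_n}{v_n}=\scal{x_n}{u_n+v_n}+\scal{y_n-x_n}{v_n}.
\]
Because $(x_n)_{n\in\NN}$ and $(v_n)_{n\in\NN}$ are bounded (weakly convergent) while $u_n+v_n\to 0$ and $y_n-x_n\to 0$ strongly, the right-hand side tends to zero, and this matches the target value $\scal{x}{-v}+\scal{x}{v}=0$. To leverage this joint control, I would introduce the product operator $\boldsymbol{M}\colon\HH\times\HH\to 2^{\HH\times\HH}$ defined by $\boldsymbol{M}(z,z')=Az\times Bz'$, which is maximally monotone on $\HH\times\HH$ equipped with the natural inner product. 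The sequence $\big((x_n,y_n),(u_n,v_n)\big)$ then lies in $\gra\boldsymbol{M}$, converges weakly to $\big((x,x),(-v,v)\big)$, and the computation above gives
\[
\scal{(x_n,y_n)}{(u_n,v_n)}\to\scal{(x,x)}{(-v,v)}.
\]
Invoking the Brezis-type weak-closedness principle for graphs of maximally monotone operators under convergence of the duality pairing (as recorded in \cite{Livre1}) yields $\big((x,x),(-v,v)\big)\in\gra\boldsymbol{M}$, that is, $-v\in Ax$ and $v\in Bx$, which is~\ref{l:op1i}.

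Assertion \ref{l:op1ii} then follows immediately. Summing the inclusions $-v\in Ax$ and $v\in Bx$ gives $0=-v+v\in Ax+Bx$; reading them through the inverses, $x\in A^{-1}(-v)$ and $x\in B^{-1}v$, so $0=-x+x\in -A^{-1}(-v)+B^{-1}v$.

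The main obstacle is the fact that neither $\scal{x_n}{u_n}$ nor $\scal{y_n}{v_n}$ is controlled on its own under the hypotheses; only their sum converges, by virtue of the coupling $u_n+v_n\to 0$ and $x_n-y_n\to 0$. The product operator $\boldsymbol{M}$ is introduced precisely to bundle the two uncontrolled scalar sequences into a single pairing on $\HH\times\HH$ that does converge to the correct limit, putting us in the exact hypothesis of the classical weak closedness lemma.
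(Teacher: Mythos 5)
Your proof is correct, but it takes a different route from the paper, which disposes of the lemma in one line by invoking \cite[Proposition~26.5]{Livre1} with $\mathcal{K}=\HH$ and $L=\Id$ (a ready-made asymptotic principle for sums of maximally monotone operators). What you have done, in effect, is reprove that proposition in the special case at hand: the reduction $u_n\weakly -v$, $y_n\weakly x$ is right; the identity $\scal{x_n}{u_n}+\scal{y_n}{v_n}=\scal{x_n}{u_n+v_n}+\scal{y_n-x_n}{v_n}$ checks out and correctly isolates why only the \emph{sum} of the two pairings is controlled; the product operator $\boldsymbol{M}\colon(z,z')\mapsto Az\times Bz'$ is indeed maximally monotone on $\HH\oplus\HH$; and the demiclosedness principle for graphs under convergence of the duality pairing (\cite[Proposition~20.38]{Livre1} in substance) then delivers $(x,-v)\in\gra A$ and $(x,v)\in\gra B$. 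The algebra for \ref{l:op1ii} is immediate, as you say. The trade-off is the usual one: the paper's citation is shorter and defers to a statement already formulated at exactly the right level of generality (arbitrary $\mathcal{K}$ and bounded linear $L$), whereas your argument is self-contained, makes the mechanism visible --- the coupling of the two uncontrolled pairings through the product space --- and would survive in a context where \cite[Proposition~26.5]{Livre1} is not available. No gap.
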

\begin{proof}
Apply \cite[Proposition~26.5]{Livre1} with $\mathcal{K}=\HH$ and 
$L=\Id$.
\end{proof}

As discussed in \cite{Opti04}, many splitting methods can
be analyzed within the powerful framework of fixed point methods
for averaged operators. The analysis provided in the present paper
therefore makes it possible to develop new methods in this
framework, for instance mean value or inertial versions of 
standard splitting methods. We provide two such examples below. 
First, we consider the
Peaceman-Rachford splitting method, which typically does not 
converge unless strong requirements are imposed on the underlying
operators \cite{Joca09}. In the spirit of Mann's work 
\cite{Mann53}, we show that mean iterations induce the convergence 
of this algorithm. 

\begin{proposition}
\label{p:dr}
Let $A\colon\HH\to2^\HH$ and $B\colon\HH\to 2^\HH$ be maximally
monotone operators such that $\zer(A+B)\neq\emp$ and 
let $\gamma\in\left]0,\pinf\right[$. Let
$(a_n)_{n\in\NN}$ and $(b_n)_{n\in\NN}$ be sequences in $\HH$
such that $\sum_{n\in\NN}\|a_n\|<\pinf$
and $\sum_{n\in\NN}\|b_n\|<\pinf$, let $x_0\in\HH$, and let
$(\mu_{n,j})_{n\in\NN, 0\leq j\leq n}$ be an array in $\RP$ that
satisfies conditions \ref{algo:1a}--\ref{algo:1e} in 
Algorithm~\ref{algo:1} with $\chi_n\equiv 1$ and such that
$\inf_{n\in\NN}\;\mu_{n+1,n}\mu_{n+1,n+1}>0$. Iterate
\begin{equation}
\label{e:pr1}
\begin{array}{l}
\text{for}\;n=0,1,\ldots\\
\left\lfloor
\begin{array}{l}
\overline{x}_n=\Sum_{j=0}^n\mu_{n,j}x_j\\
y_n=J_{\gamma B}\overline{x}_n+b_n\\
z_n=J_{\gamma A}(2y_n-\overline{x}_n)+a_n\\
x_{n+1}=\overline{x}_n+2(z_n-y_n).
\end{array}
\right.\\
\end{array}
\end{equation}
Then there exists $x\in\Fix R_{\gamma A}R_{\gamma B}$ 
such that $x_n\weakly x$ and $\overline{x}_n\weakly x$.  
Now set $y=J_{\gamma B}x$. 
Then $y\in\zer(A+B)$, $z_n-y_n\to 0$, 
$y_n\weakly y$, and $z_n\weakly y$.
\end{proposition}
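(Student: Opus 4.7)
The plan is to recast the Peaceman--Rachford recursion \eqref{e:pr1} as an instance of the mean value iteration of Example~\ref{ex:Mann} applied to the operator $T=R_{\gamma A}R_{\gamma B}$, and then to identify the weak limits of $(y_n)_{n\in\NN}$ and $(z_n)_{n\in\NN}$ via a monotonicity argument based on Lemma~\ref{l:op1}.

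First, $T=R_{\gamma A}R_{\gamma B}$ is nonexpansive as a composition of nonexpansive reflectors, so $\Id-T$ is demiclosed at $0$ by the classical demiclosedness principle. Also, $\Fix T\neq\emp$ because $\zer(A+B)\neq\emp$ (standard fact: $x\in\Fix T$ iff $J_{\gamma B}x\in\zer(A+B)$). A direct expansion of \eqref{e:pr1} yields $x_{n+1}=T\overline{x}_n+e_n$ with
\[
e_n=2\bigl(J_{\gamma A}(2J_{\gamma B}\overline{x}_n+2b_n-\overline{x}_n)-J_{\gamma A}(2J_{\gamma B}\overline{x}_n-\overline{x}_n)\bigr)+2a_n-2b_n,
\]
and the nonexpansiveness of $J_{\gamma A}$ gives $\|e_n\|\leq 2\|a_n\|+6\|b_n\|$, hence $\sum_{n\in\NN}\|e_n\|<\pinf$. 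Example~\ref{ex:Mann} then produces $x\in\Fix T=\Fix R_{\gamma A}R_{\gamma B}$ with $x_n\weakly x$, $\overline{x}_n\weakly x$, and $T\overline{x}_n-\overline{x}_n\to 0$. Combining this with $x_{n+1}-\overline{x}_n=T\overline{x}_n-\overline{x}_n+e_n\to 0$ and $x_{n+1}-\overline{x}_n=2(z_n-y_n)$ yields $z_n-y_n\to 0$.

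To identify the weak limits of $(y_n)_{n\in\NN}$ and $(z_n)_{n\in\NN}$, set $y=J_{\gamma B}x$, $\tilde y_n=J_{\gamma B}\overline{x}_n=y_n-b_n$, and $\tilde z_n=J_{\gamma A}(2y_n-\overline{x}_n)=z_n-a_n$. Then $v_n:=(\overline{x}_n-\tilde y_n)/\gamma\in B\tilde y_n$ and $u_n:=(2y_n-\overline{x}_n-\tilde z_n)/\gamma\in A\tilde z_n$, with $\tilde z_n-\tilde y_n\to 0$ and $u_n+v_n=(\tilde y_n-\tilde z_n+2b_n)/\gamma\to 0$. Since $(\overline{x}_n)_{n\in\NN}$ is bounded, so is $(\tilde y_n)_{n\in\NN}$ by nonexpansiveness of $J_{\gamma B}$. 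Let $\tilde y$ be any weak cluster point of $(\tilde y_n)_{n\in\NN}$ along a subsequence; then $v_n\weakly v:=(x-\tilde y)/\gamma$ along that subsequence. Lemma~\ref{l:op1}\ref{l:op1i} gives $v\in B\tilde y$, which forces $\tilde y=J_{\gamma B}(\tilde y+\gamma v)=J_{\gamma B}x=y$. Uniqueness of the weak cluster point yields $\tilde y_n\weakly y$, whence $y_n\weakly y$ and $z_n\weakly y$; Lemma~\ref{l:op1}\ref{l:op1ii} further gives $y\in\zer(A+B)$.

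The main obstacle is that resolvents are not weakly sequentially continuous in general, so one cannot directly infer $J_{\gamma B}\overline{x}_n\weakly J_{\gamma B}x$ from $\overline{x}_n\weakly x$; the monotonicity argument via Lemma~\ref{l:op1} is essential to uniquely identify the weak cluster points and to obtain $y\in\zer(A+B)$.
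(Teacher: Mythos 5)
Your proposal is correct and follows essentially the same route as the paper: rewrite \eqref{e:pr1} as $x_{n+1}=T\overline{x}_n+e_n$ with $T=R_{\gamma A}R_{\gamma B}$ and a summable error, invoke Example~\ref{ex:Mann} to get $x_n\weakly x\in\Fix T$ and $z_n-y_n\to 0$, and identify the weak limit of $(y_n)_{n\in\NN}$ via the graph points $(z_n-a_n,u_n)\in\gra A$, $(y_n-b_n,v_n)\in\gra B$ and Lemma~\ref{l:op1}. The only cosmetic differences are a slightly cruder (still summable) bound on $\|e_n\|$ and deriving $y\in\zer(A+B)$ from Lemma~\ref{l:op1}\ref{l:op1ii} rather than from the standard characterization of $\Fix R_{\gamma A}R_{\gamma B}$.
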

\begin{proof}
Set $T=R_{\gamma A}R_{\gamma B}$ and 
$(\forall n\in\NN)$
$e_n=2a_n+R_{\gamma A}(R_{\gamma B}\overline{x}_n+2b_n) 
-R_{\gamma A}(R_{\gamma B}\overline{x}_n)$. 
Then $T$ is nonexpansive, $\Id-T$ is therefore demiclosed,
and, since $\zer(A+B)\neq\emp$, 
\cite[Proposition~26.1(iii)(b)]{Livre1} yields
$\Fix T=\Fix R_{\gamma A}R_{\gamma B}\neq\emp$.
In addition, we derive from \eqref{e:pr1} that
\begin{equation}
\label{e:prT}
(\forall n\in\NN)\quad x_{n+1}=T\overline{x}_n+e_n,
\end{equation}
where 
\begin{align}
\sum_{n\in\NN}\|e_n\|
&\leq\sum_{n\in\NN}\big(2\|a_n\|+\|R_{\gamma A}(R_{\gamma B}
\overline{x}_n+2b_n)-R_{\gamma A}(R_{\gamma B}\overline{x}_n)\|\big)
\nonumber\\
&\leq\sum_{n\in\NN}2\big(\|a_n\|+\|b_n\|\big)
\nonumber\\
&<\pinf.
\end{align}
Consequently, we deduce from Example~\ref{ex:Mann} that 
$(x_n)_{n\in\NN}$ 
and $(\overline{x}_n)_{n\in\NN}$ converge weakly to a point 
$x\in\Fix T=\Fix R_{\gamma A}R_{\gamma B}$, and that
$T\overline{x}_n-\overline{x}_n\to 0$. In addition, 
\cite[Proposition~26.1(iii)(b)]{Livre1} asserts that
$y\in\zer(A+B)$. Next, we derive from \eqref{e:pr1} and 
\eqref{e:prT} that
$2(z_n-y_n)=x_{n+1}-\overline{x}_n=(T\overline{x}_n-\overline{x}_n)
+e_n\to 0$.
It remains to show that $y_n\weakly y$. 
Since $(\overline{x}_n)_{n\in\NN}$ converges weakly, it is
bounded. However, $(\forall n\in\NN)$ $\|y_n-y_0\|= 
\|J_{\gamma B}\overline{x}_n-J_{\gamma B} x_0+b_n\| 
\leq\|\overline{x}_n-x_0\|+\|b_n\|$. Therefore 
$(y_n)_{n\in\NN}$ is bounded. Now let $z$ be a 
weak sequential cluster point of 
$(y_n)_{n\in\NN}$, say $y_{k_n}\weakly z$. In view of 
\cite[Lemma~2.46]{Livre1}, it is enough to show that $z=y$.
To this end, set 
$(\forall n\in\NN)$ 
$v_n=\gamma^{-1}(\overline{x}_n-y_n+b_n)$
and
$w_n=\gamma^{-1}(2y_n-\overline{x}_n-z_n+a_n)$.
Then $(\forall n\in\NN)$ $(z_n-a_n,w_n)\in\gra A$ and
$(y_n-b_n,v_n)\in\gra B$.
In addition, we have $\overline{x}_{k_n}\weakly x$, 
$z_{k_n}-a_{k_n}\weakly z$, 
$v_{k_n}\weakly\gamma^{-1}(x-z)$,
$(z_{k_n}-a_{k_n})-(y_{k_n}-b_{k_n})\to 0$, and 
$v_{k_n}+w_{k_n}=\gamma^{-1}(y_{k_n}-z_{k_n}+a_{k_n}+b_{k_n})\to 0$.
Hence, we derive from Lemma~\ref{l:op1}\ref{l:op1i} that
$(z,\gamma^{-1}(x-z))\in\gra B$, i.e., $z=J_{\gamma B}x=y$.
\end{proof}

\begin{remark}
Let $f$ and $g$ be functions in $\Gamma_0(\HH)$, and specialize
Proposition~\ref{p:dr} to $A=\partial f$ and $B=\partial g$.
Then $\zer(A+B)=\Argmin(f+g)$. Moreover, \eqref{e:pr1} becomes
\begin{equation}
\label{e:fb12}
\begin{array}{l}
\text{for}\;n=0,1,\ldots\\
\left\lfloor
\begin{array}{l}
\overline{x}_n=\Sum_{j=0}^n\mu_{n,j}x_j\\
y_n=\prox_{\gamma g}\overline{x}_n+b_n\\
z_n=\prox_{\gamma f}(2y_n-\overline{x}_n)+a_n\\
x_{n+1}=\overline{x}_n+2(z_n-y_n),
\end{array}
\right.\\
\end{array}
\end{equation}
and we conclude that there exists a point $y\in\Argmin(f+g)$ such
that $y_n\weakly y$ and $z_n\weakly y$.
\end{remark}

We now propose a new forward-backward splitting framework which
includes existing instances as special cases. 
The following notion will be needed to establish
strong convergence properties (see \cite[Proposition~2.4]{Sico10}
for special cases). 

\begin{definition}{\rm \cite[Definition~2.3]{Sico10}}
\label{d:demir}
An operator $A\colon\HH\to 2^{\HH}$ is \emph{demiregular} at
$x\in\dom A$ if, for every sequence $(x_n,u_n)_{n\in\NN}$ in 
$\gra A$ and every $u\in Ax$ such that $x_n\weakly x$ and 
$u_n\to u$, we have $x_n\to x$.
\end{definition}

\begin{proposition}
\label{p:fb}
Let $\beta\in\left]0,\pinf\right[\,$, let 
$\varepsilon\in\left]0,\min\{1/2,\beta\}\right[\,$, 
let $x_0\in\HH$,
let $A\colon\HH\to 2^{\HH}$ be maximally monotone, 
and let $B\colon\HH\to\HH$ be $\beta$-cocoercive, i.e.,
\begin{equation}
(\forall x\in\HH)(\forall y\in\HH)\quad
\scal{x-y}{Bx-By}/\beta\geq\|Bx-By\|^2.
\end{equation}
Furthermore, let $(\gamma_n)_{n\in\NN}$ be a sequence in
$\left[\varepsilon,2\beta/(1+\varepsilon)\right]\,$, let
$(\mu_{n,j})_{n\in\NN, 0\leq j\leq n}$ be a real array that
satisfies conditions \ref{algo:1a}--\ref{algo:1e} in 
Algorithm~\ref{algo:1}, and
let $(a_n)_{n\in\NN}$ and $(b_n)_{n\in\NN}$ 
be sequences in $\HH$ such that 
$\sum_{n\in\NN}\chi_n\|a_n\|<\pinf$ 
and $\sum_{n\in\NN}\chi_n\|b_n\|<\pinf$. 
Suppose that $\zer(A+B)\neq\emp$ and that
\begin{equation}
\label{e:fb1}
(\forall n\in\NN)\quad
\lambda_n\in\left[\varepsilon,1+(1-\varepsilon)
\bigg(1-\dfrac{\gamma_n}{2\beta}\bigg)\right], 
\end{equation}
and set $(\forall n\in\NN)$ 
$\phi_n=2/(4-\gamma_n/\beta)$.
For every $n\in\NN$, iterate
\begin{equation}
\label{e:fb2}
x_{n+1}=\overline{x}_n+\lambda_n\Big(J_{\gamma_n A}
\big(\overline{x}_n-\gamma_n(B\overline{x}_n+b_n)\big)
+a_n-\overline{x}_n\Big),\quad\text{where}\quad
\overline{x}_n=\Sum_{j=0}^n\mu_{n,j}x_j.
\end{equation}
Suppose that one of the following is satisfied:
\begin{enumerate}[label=\rm(\alph*)]
\setlength{\itemsep}{1pt}
\item 
\label{p:fba}
$\displaystyle{\inf_{n\in\NN}}\;
\displaystyle{\min_{0\leq j\leq n}}$ $\mu_{n,j}\geq 0$.
\item 
\label{p:fbc}
$a_n\equiv b_n\equiv 0$, $(\mu_{n,j})_{n\in\NN, 0\leq j\leq n}$
satisfies \eqref{e:matinertiel}, and one of conditions
\ref{c:2a}--\ref{c:2d} in Corollary~\ref{c:2} is satisfied.
\end{enumerate}
Then the following hold:
\begin{enumerate}
\itemsep0mm 
\item
\label{p:fbi}
$J_{\gamma_n A}(\overline{x}_n-\gamma_nB\overline{x}_n)
-\overline{x}_n\to 0$.
\item
\label{p:fbii}
Let $z\in\zer(A+B)$. Then $B\overline{x}_n\to Bz$.
\item
\label{p:fbiii}
There exists $x\in\zer(A+B)$ such that $x_n\weakly x$.
\item
\label{p:fbiv}
Suppose that $A$ or $B$ is demiregular at every point in 
$\zer(A+B)$. Then there exists $x\in\zer(A+B)$ such 
that $x_n\to x$.
\end{enumerate}
\end{proposition}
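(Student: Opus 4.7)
The plan is to recognize~\eqref{e:fb2} as a two-layer instance of Algorithm~\ref{algo:1} in case~\ref{prob:1a} of Problem~\ref{prob:1}, with $m=2$, $T_{1,n}=J_{\gamma_nA}$ (firmly nonexpansive, so $\alpha_{1,n}=1/2$), $T_{2,n}=\Id-\gamma_nB$ ($\alpha_{2,n}=\gamma_n/(2\beta)$-averaged by cocoercivity of $B$), $e_{1,n}=a_n$, and $e_{2,n}=-\gamma_nb_n$. Remark~\ref{r:t_n}\ref{r:t_nii} then yields the announced $\phi_n=2/(4-\gamma_n/\beta)$, and the standard characterization $\Fix\bigl(J_{\gamma A}(\Id-\gamma B)\bigr)=\zer(A+B)$ gives $S=\zer(A+B)\neq\emp$. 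A direct computation shows that $1/\phi_n-\lambda_n\geq\varepsilon(1-\gamma_n/(2\beta))\geq\varepsilon^2/(1+\varepsilon)>0$, so~\eqref{e:fb1} is compatible with $\lambda_n\in\left]0,1/\phi_n\right]$, implies $\lambda_n\leq\varepsilon+(1-\varepsilon)/\phi_n$, and forces $\sup_{n\in\NN}\phi_n<1$. In case~\ref{p:fba}, the boundedness of $(\lambda_n)_{n\in\NN}$ and $(\gamma_n)_{n\in\NN}$ together with the summability hypotheses on $(a_n)_{n\in\NN}$ and $(b_n)_{n\in\NN}$ give $\sum_n\chi_n\vartheta_n<\pinf$; in case~\ref{p:fbc}, $\vartheta_n\equiv 0$.

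\textbf{Parts~\ref{p:fbi} and~\ref{p:fbii}.} Theorem~\ref{t:1}\ref{t:1iva}, accessed through Corollary~\ref{c:1} in case~\ref{p:fba} and through Corollary~\ref{c:2}\ref{c:2i0} in case~\ref{p:fbc}, yields $\lambda_n(1/\phi_n-\lambda_n)\|T_n\overline{x}_n-\overline{x}_n\|^2\to 0$, and the uniform positive lower bound on $\lambda_n(1/\phi_n-\lambda_n)$ delivers~\ref{p:fbi}. For~\ref{p:fbii}, I would fix $z\in\zer(A+B)\subset S$ and apply Corollary~\ref{c:1}\ref{c:1ii} (respectively Corollary~\ref{c:2}\ref{c:2i}) with $i=2$: since $T_{2+,n}=\Id$ and $\Id-T_{2,n}=\gamma_nB$, the conclusion reduces to $\lambda_n\gamma_n(2\beta-\gamma_n)\|B\overline{x}_n-Bz\|^2\to 0$, and the uniform positive lower bounds on $\lambda_n$, $\gamma_n$, and $2\beta-\gamma_n$ give $B\overline{x}_n\to Bz$.

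\textbf{Part~\ref{p:fbiii}.} The conceptual core is to identify every weak sequential cluster point of $(\overline{x}_n)_{n\in\NN}$ with an element of $\zer(A+B)$, via Lemma~\ref{l:op1}. Setting $y_n=J_{\gamma_nA}(\overline{x}_n-\gamma_nB\overline{x}_n)$, $u_n=\gamma_n^{-1}(\overline{x}_n-y_n)-B\overline{x}_n$, and $v_n=B\overline{x}_n$, one has $(y_n,u_n)\in\gra A$, $(\overline{x}_n,v_n)\in\gra B$, $u_n+v_n=\gamma_n^{-1}(\overline{x}_n-y_n)\to 0$ from~\ref{p:fbi} (since $\gamma_n\geq\varepsilon$), and $v_n\to Bz$ for any fixed $z\in\zer(A+B)$ by~\ref{p:fbii}. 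If $\overline{x}_{k_n}\weakly x$, then also $y_{k_n}\weakly x$, and Lemma~\ref{l:op1}\ref{l:op1ii} forces $x\in\zer(A+B)=S$. The weak convergence of $(x_n)_{n\in\NN}$ then follows from Corollary~\ref{c:1}\ref{c:1iii'} in case~\ref{p:fba} (thanks to $\sup_n\phi_n<1$) and from Corollary~\ref{c:2}\ref{c:2ii} in case~\ref{p:fbc}.

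\textbf{Part~\ref{p:fbiv}.} Let $x\in\zer(A+B)$ denote the weak limit produced by~\ref{p:fbiii}. If $B$ is demiregular at $x$, then $\overline{x}_n\weakly x$ combined with $B\overline{x}_n\to Bx$ (from~\ref{p:fbii}) forces $\overline{x}_n\to x$; if $A$ is demiregular at $x$, then $y_n\weakly x$ together with $u_n\to-Bx$ gives $y_n\to x$, hence again $\overline{x}_n\to x$. Thus $(\overline{x}_n)_{n\in\NN}$ has a strong cluster point in $S$, and in case~\ref{p:fba} the bound $\lambda_n\leq(1-\delta)/\phi_n$ with $\delta=\varepsilon(1-\sup_n\phi_n)>0$ lets us invoke Theorem~\ref{t:1}\ref{t:1iii} to conclude $x_n\to x$; in case~\ref{p:fbc}, the identity $\overline{x}_n-x_n\to 0$ furnished by Corollary~\ref{c:2}\ref{c:2ii0}, combined with $\overline{x}_n\to x$, yields the same conclusion. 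The main technical obstacle is the simultaneous bookkeeping for the two regimes~\ref{p:fba}--\ref{p:fbc} and the verification that the constraints on $\lambda_n$ in~\eqref{e:fb1} are precisely those required to activate Corollaries~\ref{c:1} and~\ref{c:2} with the necessary safety margin.
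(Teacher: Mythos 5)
Your proposal is correct and follows essentially the same route as the paper: the same two-layer identification $T_{1,n}=J_{\gamma_nA}$, $T_{2,n}=\Id-\gamma_nB$ with $\alpha_{1,n}=1/2$, $\alpha_{2,n}=\gamma_n/(2\beta)$, the same verification that \eqref{e:fb1} yields $\lambda_n\le\varepsilon+(1-\varepsilon)/\phi_n$ and $\sup_n\phi_n<1$, and the same use of Lemma~\ref{l:op1} with $(y_n,u_n)\in\gra A$, $(\overline{x}_n,v_n)\in\gra B$ to locate cluster points in $\zer(A+B)$. The only (harmless) deviation is in part~\ref{p:fbi}, where you extract $T_n\overline{x}_n-\overline{x}_n\to 0$ directly from the relaxation residual of Theorem~\ref{t:1}\ref{t:1iva} via the lower bound $\lambda_n(1/\phi_n-\lambda_n)\ge\varepsilon^3/(1+\varepsilon)$, whereas the paper assembles it from the two layer-wise residuals of Corollary~\ref{c:1}\ref{c:1ii}/Corollary~\ref{c:2}\ref{c:2i}; both are valid.
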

\begin{proof}
We apply Corollary~\ref{c:1} in case \ref{p:fba}
and from Corollary~\ref{c:2} in case \ref{p:fbc}.
We first note that \eqref{e:fb2} is an instance of
Algorithm~\ref{algo:1} with $m=2$ and $(\forall n\in\NN)$
$T_{1,n}=J_{\gamma_n A}$, $T_{2,n}=\Id-\gamma_n B$,
$e_{1,n}=a_n$, and $e_{2,n}=-\gamma_nb_n$.
Indeed, for every $n\in\NN$, $T_{1,n}$ is 
$\alpha_{1,n}$-averaged with $\alpha_{1,n}=1/2$ 
\cite[Remark~4.34(iii) and Corollary~23.9]{Livre1},
$T_{2,n}$ is $\alpha_{2,n}$-averaged with 
$\alpha_{2,n}=\gamma_n/(2\beta)$ \cite[Proposition~4.39]{Livre1},
and the averaging constant of $T_{1,n}T_{2,n}$ is therefore given
by \eqref{e:phi_n} as 
\begin{equation}
\label{e:9gh}
\dfrac{{\alpha_{1,n}+\alpha_{2,n}-
2\alpha_{1,n}\alpha_{2,n}}}{1-\alpha_{1,n}\alpha_{2,n}}=
\dfrac{2}{4-\gamma_n/\beta}=\phi_n.
\end{equation}
On the other hand, we are in the setting of 
Problem~\ref{prob:1}\ref{prob:1a} since
\cite[Proposition~26.1(iv)(a)]{Livre1} yields
$(\forall n\in\NN)$
$\Fix(T_{1,n} T_{2,n})=\zer(A+B)\neq\emp$.
We also observe that, in view of \eqref{e:fb1},
\begin{equation}
\label{e:fb5}
(\forall n\in\NN)\quad\varepsilon\leq
\lambda_n\leq\varepsilon+\frac{1-\varepsilon}{\phi_n},\quad
\frac{1-\alpha_{1,n}}{\alpha_{1,n}}=1,\quad
\text{and}\quad\frac{1-\alpha_{2,n}}{\alpha_{2,n}}\geq\varepsilon
\end{equation}
which, by \eqref{e:phi_n}, yields
\begin{equation}
\sup_{n\in\NN}\phi_n\leq\dfrac{1+\varepsilon}{1+2\varepsilon}<1.
\end{equation}
In addition, it results from \eqref{e:9gh} that
$(\forall n\in\NN)$ $\lambda_n\leq 1/\phi_n+\varepsilon
\leq 2-\gamma_n/(2\beta)+\varepsilon\leq 2+\varepsilon$.
Therefore, 
\begin{equation}
\label{e:fb7}
\begin{cases}
\sum_{n\in\NN}\chi_n\lambda_n\|e_{1,n}\|=(2+\varepsilon)
\sum_{n\in\NN}\chi_n\|a_n\|<\pinf\\
\sum_{n\in\NN}\chi_n\lambda_n\|e_{2,n}\|\leq
2\beta(2+\varepsilon)\sum_{n\in\NN}\chi_n\|b_n\|<\pinf, 
\end{cases}
\end{equation}
which establishes \eqref{e:1f} for case \ref{p:fba}.
Altogether, \eqref{e:fb5}, Corollary~\ref{c:1}\ref{c:1ii}, and
Corollary~\ref{c:2}\ref{c:2i} imply that, for every
$z\in\zer(A+B)$, 
\begin{equation}
\label{e:fb8}
\begin{cases}
(T_{1,n}-\Id)T_{2,n}\overline{x}_n+(T_{2,n}-\Id)z=
(T_{1,n}-\Id)T_{2,n}\overline{x}_n-(T_{1,n}-\Id)T_{2,n}z
\to 0\\
(T_{2,n}-\Id)\overline{x}_n-(T_{2,n}-\Id)z\to 0.
\end{cases}
\end{equation}
Now set
\begin{equation}
\label{e:fb10}
(\forall n\in\NN)\quad y_n=J_{\gamma_n A}
(\overline{x}_n-\gamma_nB\overline{x}_n),\quad
u_n=\dfrac{\overline{x}_n-y_n}{\gamma_n}-
B\overline{x}_n,\quad
\text{and}\quad v_n=B\overline{x}_n,
\end{equation}
and note that 
\begin{equation}
\label{e:fb11}
(\forall n\in\NN)\quad u_n\in Ay_n.
\end{equation}

\ref{p:fbi}: Let $z\in\zer(A+B)$. Then \eqref{e:fb8} yields
$J_{\gamma_n A}(\overline{x}_n-\gamma_nB\overline{x}_n)
-\overline{x}_n=(T_{1,n}-\Id)T_{2,n}\overline{x}_n+
(T_{2,n}-\Id)z+(T_{2,n}-\Id)\overline{x}_n-(T_{2,n}-\Id)z\to 0$.

\ref{p:fbii}: 
We derive from \eqref{e:fb8} that
\begin{align} 
\|B\overline{x}_n-Bz\|
=\gamma_n^{-1}\|T_{2,n}\overline{x}_n-\overline{x}_n-T_{2,n}z+z\|
\leq\varepsilon^{-1}\|T_{2,n}\overline{x}_n-
\overline{x}_n-T_{2,n}z+z\|\to 0.
\end{align}

\ref{p:fbiii}: 
Let $(k_n)_{n\in\NN}$ be a strictly
increasing sequence in $\NN$ and let $y\in\HH$ be such that 
$\overline{x}_{k_n}\weakly y$. 
In view of Corollary~\ref{c:1}\ref{c:1iii'} in case \ref{p:fba},
and Corollary~\ref{c:2}\ref{c:2ii} in case \ref{p:fbc}, 
it remains to show that $y\in\zer(A+B)$. 
We derive from \ref{p:fbi} that $y_n-\overline{x}_n\to 0$.
Hence $y_{k_n}\weakly y$. Now let $z\in\zer(A+B)$. Then 
\ref{p:fbii} implies that $B\overline{x}_{n}\to Bz$. 
Altogether, $y_{k_n}\weakly y$, $v_{k_n}\weakly Bz$,
$y_{k_n}-\overline{x}_{k_n}\to 0$,
$u_{k_n}+v_{k_n}\to 0$, and, for every $n\in\NN$,
$u_{k_n}\in Ay_{k_n}$ and
$v_{k_n}\in B\overline{x}_{k_n}$.
It therefore follows from Lemma~\ref{l:op1}\ref{l:op1ii} that
$y\in\zer(A+B)$.

\ref{p:fbiv}: 
By \ref{p:fbiii}, there exists $x\in\zer(A+B)$ such that 
$x_n\weakly x$. In addition, we derive from
Corollary~\ref{c:1}\ref{c:1iii'} or 
Corollary~\ref{c:2}\ref{c:2ii0} that 
\begin{equation}
\label{e:fb77}
\overline{x}_n-x_{n+1}\to 0 
\quad\text{or}\quad
\overline{x}_n-x_n\to 0.
\end{equation}
Hence it follows from 
\eqref{e:fb10} and \ref{p:fbi} that $y_n\weakly x$,
and, from \eqref{e:fb10} and \ref{p:fbii}, that
$u_n\to -Bx$.
In turn, if $A$ is demiregular on $\zer(A+B)$, 
we derive from \eqref{e:fb11} that $y_n\to x$. Since 
$y_n-\overline{x}_n\to 0$, \eqref{e:fb77} yields $x_n\to x$. 
Now suppose that $B$ is demiregular on $\zer(A+B)$.
Since $\overline{x}_n\weakly x$, \ref{p:fbii} implies that
$\overline{x}_n\to x$ and it follows from \eqref{e:fb77}
that $x_n\to x$.
\end{proof}

\begin{remark}
\label{r:solsburyh}
As noted in Remark~\ref{r:solsbury}, we can allow errors in 
inertial multi-layer methods and, in particular, in the
inertial forward-backward algorithm. Thus, suppose that, in
Proposition~\ref{p:fb},
$\lambda_n\equiv 1$ and $A$ has bounded domain. Then
$\bigcup_{n\in\NN}\ran
T_{1,n}=\bigcup_{n\in\NN}\ran(\Id+\gamma_nA)^{-1}=\dom A$ is
bounded. Hence, it follows from Remark~\ref{r:solsbury} that, if
$\sum_{n\in\NN}\chi_n\|a_n\|<\pinf$ and
$\sum_{n\in\NN}\chi_n\|b_n\|<\pinf$, the conclusions of 
Proposition~\ref{p:fb}\ref{p:fbc} under any of 
assumptions \ref{c:2a}--\ref{c:2c} of Corollary~\ref{c:2}
remain true for the inertial forward-backward algorithm
\begin{equation}
\label{e:fbZ}
\begin{array}{l}
\text{for}\;n=0,1,\ldots\\
\left\lfloor
\begin{array}{l}
\overline{x}_n=(1+\eta_n)x_n-\eta_nx_{n-1}\\
x_{n+1}=J_{\gamma_n A}\big(\overline{x}_n-
\gamma_n(B\overline{x}_n+b_n)\big)+a_n.
\end{array}
\right.\\
\end{array}
\end{equation}
\end{remark}

\begin{remark}
\label{r:fbvar}
Let $f\in\Gamma_0(\HH)$, let $g\colon\HH\to\RR$ be convex and
differentiable with a $1/\beta$-Lipschitzian gradient, 
and suppose that $\Argmin(f+g)\neq\emp$. Then $\nabla g$ is
$\beta$-cocoercive \cite[Corollary~18.17]{Livre1}. Upon setting
$A=\partial f$ and $B=\nabla g$ in Proposition~\ref{p:fb}, we see
that, for every $n\in\NN$, \eqref{e:fb2} becomes
\begin{equation}
\label{e:fb13}
x_{n+1}=\overline{x}_n+\lambda_n\Big(\prox_{\gamma_n f}
\big(\overline{x}_n-\gamma_n(\nabla g(\overline{x}_n)+b_n)\big)
+a_n-\overline{x}_n\Big),\quad\text{where}\quad
\overline{x}_n=\Sum_{j=0}^n\mu_{n,j}x_j,
\end{equation}
and we conclude that there exists $x\in\Argmin(f+g)$ such that
$x_n\weakly x$ and $\nabla g(\overline{x}_n)\to\nabla g(x)$.
\end{remark}

\begin{remark}
Various results on the convergence of the forward-backward
splitting algorithm can be recovered from Proposition~\ref{p:fb}.
\begin{enumerate}
\itemsep0mm 
\item 
Suppose that $(\forall n\in\NN)$ $\lambda_n\leq 1$ and 
$(\forall j\in\{0,\ldots,n\})$ $\mu_{n,j}=\delta_{n,j}$.
Then conditions \ref{algo:1a}--\ref{algo:1e} in 
Algorithm~\ref{algo:1} hold with $\chi_n\equiv 1$
by \cite[Lemma~5.31]{Livre1}. In turn, 
Proposition~\ref{p:fb}\ref{p:fba}\ref{p:fbiii} 
reduces to \cite[Corollary~6.5]{Opti04}. In the context
of Remark~\ref{r:fbvar}, 
Proposition~\ref{p:fb}\ref{p:fba}\ref{p:fbiii}
captures \cite[Theorem~3.4(i)]{Smms05}. In this setting,
under suitable conditions on the errors, it is shown in 
\cite[Theorem~3(vi)]{Mapr17} that 
$(f+g)(x_n)-\min(f+g)(\HH)=o(1/n)$.
\item 
In the context of Remark~\ref{r:fbvar},
let $(\eta_n)_{n\in\NN}$ be a sequence in $\lzeroun$ that
satisfies condition \ref{c:2b} in Corollary~\ref{c:2},
and set $x_{-1}=x_0$ and $\eta_0=0$. If 
$\gamma_n\equiv\gamma_0\leq\beta$ and $\lambda_n\equiv 1$, 
Proposition~\ref{p:fb}\ref{p:fbc}\ref{p:fbiii}
covers the scheme 
\begin{equation}
(\forall n\in\NN)\quad
x_{n+1}=\prox_{\gamma_0 f}\Big(x_n+\eta_n(x_n-x_{n-1})-\gamma_0
\nabla g\big(x_n+\eta_n(x_n-x_{n-1})\big)\Big) 
\end{equation}
studied in \cite[Theorem~4.1]{Cham15}, where it was established that
$\sum_{n\in\NN} n\|x_n-x_{n-1}\|^2<\pinf$. In this case, it is
shown in \cite[Theorem~1]{Atto16} that
$(f+g)(x_n)-\min(f+g)(\HH)=o(1/n^2)$ .
\item  
If $\lambda_n\equiv 1$, 
then Proposition~\ref{p:fb}\ref{p:fbc}\ref{p:fbiii} 
under hypothesis \ref{c:2c} of Corollary~\ref{c:2} establishes a
statement made in \cite[Theorem~1]{Lore14}. Let us note, however, 
that the proof of \cite{Lore14} is not convincing as the authors 
appear to use the weak continuity of some operators which 
are merely strongly continuous.
\item 
Suppose that $(\forall n\in\NN)$
$\lambda_n\leq(1-\varepsilon)(2+\varepsilon-\gamma_n/(2\beta))$ 
and $(\forall j\in\{0,\ldots,n\})$ $\mu_{n,j}=\delta_{n,j}$. 
Then items \ref{p:fba}\ref{p:fbiii}
and \ref{p:fba}\ref{p:fbiv}
of Proposition~\ref{p:fb} capture respectively items
(iii) and (iv)(a)\&(b) of \cite[Proposition~4.4]{Jmaa15}.
In addition, in the context of Remark~\ref{r:fbvar},
Proposition~\ref{p:fb}\ref{p:fba}\ref{p:fbiii} captures 
\cite[Proposition~4.7(iii)]{Jmaa15}.
\item 
Proposition~\ref{p:fb} also applies to the proximal point
algorithm. Indeed, when $B=0$, it suffices to allow
$\beta=\pinf$ and $\alpha_{2,n}\equiv 0$, to set 
$1/\pinf=0$ and $1/0=\pinf$, and to take $(\gamma_n)_{n\in\NN}$ in
$\left[\varepsilon,\pinf\right[$.
In this setting, the proof remains valid and:
\begin{enumerate}
\item
Proposition~\ref{p:fb}\ref{p:fbc}\ref{p:fbi}\&%
\ref{p:fbiii}
under hypothesis \ref{c:2c} of Corollary~\ref{c:2} capture the
error-free case of 
\cite[Theorem~3.1]{Alva04}, while Theorem~\ref{t:1} covers its
general case. 
\item 
Let $\eta\in\left]0,1/3\right[$, set $\sigma=(1-3\eta)/2$
and $\vartheta=2/3$, and suppose that $\lambda_n\equiv 1$.
Then Proposition~\ref{p:fb}\ref{p:fbc} under hypothesis \ref{c:2d}
of Corollary~\ref{c:2} yields \cite[Proposition~2.1]{Alva01}.
\end{enumerate}
\end{enumerate}
\end{remark}

Next, we derive from Corollary~\ref{c:1} a mean value extension of 
Polyak's subgradient projection method \cite{Poly69} (likewise,
an inertial version can be derived from Corollary~\ref{c:2}).

\begin{example}
\label{ex:boris}
Let $C$ be a nonempty closed convex subset of $\HH$ with projector 
$P_C$, let $f\colon\HH\to\RR$ be a continuous convex function 
such that $\Argmin_C f\neq\emp$ and $\theta=\min f(C)$
is known. Suppose that one of the following holds:
\begin{enumerate}
\itemsep0mm 
\item
\label{ex:borisi}
$f$ is bounded on every bounded subset of $\HH$.
\item
\label{ex:borisii}
The conjugate $f^*$ of $f$ is supercoercive, i.e., 
$\lim_{\|u\|\to\pinf}f^*(u)/\|u\|=\pinf$.
\item
\label{ex:borisiii}
$\HH$ is finite-dimensional.
\end{enumerate}
Let $\eta\in\zeroun$, let
$\varepsilon\in\left]0,\eta/(2+\eta)\right[$, let 
$(\mu_{n,j})_{n\in\NN, 0\leq j\leq n}$ 
be an array in $\RP$ that satisfies conditions 
\ref{algo:1a}--\ref{algo:1e} in Algorithm~\ref{algo:1},
let $(\xi_n)_{n\in\NN}$ be in 
$[\eta,2-\eta]$, let $(\lambda_n)_{n\in\NN}$ be 
in $[\varepsilon,(1-\varepsilon)(2-\xi_n/2)]$,
let $s$ be a selection of $\partial f$, and let $x_0\in C$.
Iterate
\begin{equation}
\label{e:newcoldwar}
\begin{array}{l}
\text{for}\;n=0,1,\ldots\\
\left\lfloor
\begin{array}{l}
\overline{x}_n=\Sum_{j=0}^n\mu_{n,j}x_j\\
x_{n+1}=
\begin{cases}
\overline{x}_n+\lambda_n\bigg(P_C\bigg(\overline{x}_n+\xi_n
\Frac{\theta-f(\overline{x}_n)}
{\|s(\overline{x}_n)\|^2}s(\overline{x}_n)\bigg)
-\overline{x}_n\bigg),
&\text{if}\;\: s(\overline{x}_n)\neq 0;\\
\overline{x}_n,&\text{if}\;\:s(\overline{x}_n)=0.
\end{cases}
\end{array}
\right.
\end{array}
\end{equation}
Then there exists $x\in\Argmin_C f$ such that $x_n\weakly x$.
\end{example}
\begin{proof}
Let $G$ be the subgradient projector onto 
$D=\menge{x\in\HH}{f(x)\leq\theta}$ associated with $s$, that is,
\begin{equation}
G\colon\HH\to\HH\colon x\mapsto
\begin{cases}
x+\Frac{\theta-f(x)}{\|s(x)\|^2}s(x),&\text{if}\;\;f(x)>\theta;\\
x,&\text{if}\;\;f(x)\leq\theta.
\end{cases}
\end{equation}
Then $G$ is firmly quasinonexpansive 
\cite[Proposition~29.41(iii)]{Livre1}.
Now set $(\forall n\in\NN)$ $T_{1,n}=P_C$, 
$T_{2,n}=\Id+\xi_n(G-\Id)$, $\alpha_{1,n}=1/2$, and 
$\alpha_{2,n}=\xi_n/2$. Then, for every $n\in\HH$, 
$T_{1,n}$ is an $\alpha_{1,n}$-averaged nonexpansive operator 
\cite[Proposition~4.16]{Livre1}, $T_{2,n}$ is an
$\alpha_{2,n}$-averaged quasinonexpansive operator,
\cite[Proposition~4.49(i)]{Livre1} yields
\begin{equation}
\label{dFse7hY03a}
\Fix T_{1,n}T_{2,n}=\Fix T_{1,n}\cap\Fix T_{2,n}=
C\cap D=\Argmin_C f,
\end{equation}
and Remark~\ref{r:t_n}\ref{r:t_nii} asserts that 
$T_{1,n}T_{2,n}$ is an averaged quasinonexpansive operator 
with constant $\phi_n=2/(4-\xi_n)$ and
$\lambda_n\leq(1-\varepsilon)/\phi_n$. Thus, 
the problem of minimizing $f$ over $C$ is a special case of
Problem~\ref{prob:1}\ref{prob:1b} with $m=2$, and 
\eqref{e:newcoldwar} is a special case of Algorithm~\ref{algo:1}
with $e_{1,n}\equiv 0$ and $e_{2,n}\equiv 0$. 
Now let $(k_n)_{n\in\NN}$ be a strictly increasing sequence in 
$\NN$ and let $x\in\HH$ be such that 
$\overline{x}_{k_n}\weakly x$. Then, by
Corollary~\ref{c:1}\ref{c:1iii}, it remains to show that 
$x\in C\cap D$. We derive from Corollary~\ref{c:1}\ref{c:1ii} and 
\eqref{dFse7hY03a} that
$T_{2,{k_n}}\overline{x}_{k_n}-P_CT_{2,{k_n}}
\overline{x}_{k_n}\to 0$ and 
$\overline{x}_{k_n}-T_{2,{k_n}}\overline{x}_{k_n}\to 0$. Therefore
$C\ni P_CT_{2,{k_n}}\overline{x}_{k_n}=
(P_CT_{2,{k_n}}\overline{x}_{k_n}-T_{2,n}\overline{x}_{k_n})+
(T_{2,{k_n}}\overline{x}_{k_n}-\overline{x}_{k_n})+
\overline{x}_{k_n}\weakly x$ and, since $C$ is weakly closed, 
$x\in C$. On the other hand, 
$\|G\overline{x}_n-\overline{x}_{n}\|=
\|T_{2,n}\overline{x}_n-\overline{x}_{n}\|/\xi_n\leq
\|T_{2,n}\overline{x}_n-\overline{x}_{n}\|/\eta\to 0$. 
Since \ref{ex:borisiii}$\Rightarrow$\ref{ex:borisii}%
$\Leftrightarrow$\ref{ex:borisi} \cite[Proposition~16.20]{Livre1}
and \ref{ex:borisi} imply that $\Id-G$ is demiclosed at $0$
\cite[Proposition~29.41(vii)]{Livre1}, we conclude that
$x\in\Fix G=D$.
\end{proof}

\begin{remark}
Example~\ref{ex:boris} reverts to Polyak's classical result 
\cite[Theorem~1]{Poly69} in the case
when $(\forall n\in\NN)$ $\lambda_n=1$ 
and $(\forall j\in\{0,\ldots,n\})$ $\mu_{n,j}=\delta_{n,j}$. The
unrelaxed pattern $\lambda_n\equiv 1$ is indeed achievable
because $(\forall n\in\NN)$ $\lambda_n\in
[\varepsilon,(1-\varepsilon)(2-\xi_n/2)]$ and
$(1-\varepsilon)(2-\xi_n/2)\geq(1-\varepsilon)(2-(2-\eta)/2)
>(1-\eta/(2+\eta))(1+\eta/2)=1$.
\end{remark}

\end{document}